\date{\today}
\newtheorem{localclaim}{Claim}
\newcommand{\distas}[1]{\mathbin{\overset{#1}{\kern\z@\sim}}}
\newcommand{\distras}[1]{
  \mathbin{\overset{#1}{\kern\z@\resizebox{\wd\mybox}{\ht\mysim}{$\sim$}}}
}
    \newtheorem{theorem}{Theorem}
    \newtheorem{lemma}[theorem]{Lemma}
    \newtheorem{proposition}[theorem]{Proposition}
\theoremstyle{definition} 
    \newtheorem{definition}[theorem]{Definition}
    \newtheorem{result}[theorem]{Result}
    \newtheorem{remark}[theorem]{Remark}
    \newtheorem{example}[theorem]{Example}
    \newtheorem{exercise}[theorem]{Exercise}
\def\sig{\sigma}
\newcommand{\E}{\mbox{E}}
\def\N{\mathbb{N}}
\def\PP{{\mathcal P}}
\def\tends{\rightarrow}
\def\<{\langle}
\def\>{\rangle}
\def\P{\mbox{P}}
\newcommand\Tr{{\mbox{Tr}}}
\newcommand\mnote[1]{} 
\newcommand\be{\begin{equation*}}
\newcommand\ee{\end{equation*}}
\newcommand\ben{\begin{equation}}
\newcommand\een{\end{equation}}
\newcommand\bes{\begin{eqnarray*}}
\newcommand\ees{\end{eqnarray*}}
\newcommand\bex{\begin{exercise}}
\newcommand\eex{\end{exercise}}
\newcommand\beg{\begin{example}}
\newcommand\eeg{\end{example}}
\newcommand\benu{\begin{enumerate}}
\newcommand\eenu{\end{enumerate}}
\newcommand\beit{\begin{itemize}}
\newcommand\eeit{\end{itemize}}
\newcommand\berk{\begin{remark}}
\newcommand\eerk{\end{remark}}
\newcommand\bdefn{\begin{defintion}}
\newcommand\edefn{\end{definition}}
\newcommand\bthm{\begin{theorem}}
\newcommand\ethm{\end{theorem}}
\newcommand\bprf{\begin{proof}}
\newcommand\eprf{\end{proof}}
\newcommand\blem{\begin{lemma}}
\newcommand\elem{\end{lemma}}
\newcommand{\var}{\mbox{\rm Var}}
\newcommand{\Cov}{\mbox{\rm Cov}}
\newcommand{\sm}{{\raise0.3ex\hbox{$\scriptstyle \setminus$}}}
\def\sig{\sigma}
\def\tends{\rightarrow}
\def\CHI{\mathchoice%
{\raise2pt\hbox{$\chi$}}%
{\raise2pt\hbox{$\chi$}}%
{\raise1.3pt\hbox{$\scriptstyle\chi$}}%
{\raise0.8pt\hbox{$\scriptscriptstyle\chi$}}}
\def\smalloplus{\raise1pt\hbox{$\,\scriptstyle \oplus\;$}}
\title [Linear eigenvalue statistics of Hankel matrices] {Asymptotic Behaviour of Linear eigenvalue statistics of Hankel matrices}
\author{Kiran Kumar A.S.}
\address{Department of Mathematics\\
        Indian Institute of Technology Bombay\\
         Powai, Mumbai, Maharashtra 400076, India}
 \email{kiran [at] math.iitb.ac.in}
\author{Shambhu Nath Maurya}
\address{Department of Mathematics\\
	Indian Institute of Technology Bombay\\
	Powai, Mumbai, Maharashtra 400076, India}
\email{snmaurya [at] math.iitb.ac.in}
\date{\today}
\thanks{The research of Kiran Kumar A.S. is supported by IIT Bombay
and the work of Shambhu Nath Maurya is partially supported by UGC Doctoral Fellowship, India.}
\begin{document}

\begin{abstract}
   We study linear eigenvalue statistics of band Hankel matrices with Brownian motion entries. We prove that, the centred, normalized linear eigenvalue statistics of band Hankel matrices obey a central limit theorem (CLT) type result. We also discuss the convergence of linear eigenvalue statistics of band Hankel matrices with independent entries for odd power monomials. Our method is based on trace formula, moment method and some results of process convergence.

\end{abstract}

\maketitle

\noindent{\bf Keywords :}
 Linear eigenvalue statistics, band Hankel matrix, Central limit theorem, Brownian motion, time dependent fluctuations.
 
\section{Introduction}

For an $n \times n$ matrix $A_n$, the linear eigenvalue statistics is defined as 
\begin{equation} \label{eqn:LES}
\mathcal{A}_n(\phi)= \sum_{i=1}^{n} \phi(\lambda_i),
\end{equation} 
where $\lambda_1 , \lambda_2 , \ldots , \lambda_n$ are the eigenvalues of $A_n$ and $\phi$ is a `nice' test function. The study of linear eigenvalue statistics is a major area of interest in random matrix theory. For an overview of the fluctuations of linear eigenvalue statistics for various types of random matrices,  an interested reader can look into the following papers, \cite{bai2004clt}, \cite{jana2014}, \cite{li_sun_2015}, \cite{adhikari_saha2018}, \cite{lytova2009central}, \cite{m&s_toeplitz_2020}, \cite{a_m&s_circulaun2020}.

Hankel and the closely related Toeplitz are two important classes of random matrices. In particular, Hankel matrices show up prominently in studies of Pad\'{e} approximation, moment problems and orthogonal polynomials (see\cite{brezinski_pade}, \cite{naum_book}). In \cite{bai1999}, Bai proposed the study of limiting spectral distributions (LSD) of Hankel and the closely related Toeplitz matrix. The existence of LSD for Hankel matrices was shown independently by Hammond and Miller  \cite{hammond&miller} and Bryc et al. \cite{bryc}. Later with certain band conditions, Basak, Bose \cite{basak} and Liu, Wang \cite{liu} independently found the limit spectral distribution of band Hankel matrices. In 2012, it was proved by Liu et al. that the linear eigenvalue statistics obey a central limit theorem type of result  for $\phi(x) = x^{2k}$ for $k \in \N$.


The fluctuation problem we are interested to consider in this article, is inspired by the  results on band Toeplitz matrices by Li and Sun \cite{li_sun_2015}.
The authors  studied time dependent fluctuations of linear eigenvalue statistics for band Toeplitz matrices with standard Brownian motion entries. 
We follow the definition of Hankel matrix adopted in \cite{liu2012fluctuations}. 

Consider an input sequence $\{a_n(t);t\geq 0\}_{n\geq 1}$ of independent
standard Brownian motions along with $a_0(t)\equiv 0$. For a sequence $\{b_n\}$, with $b_n \rightarrow \infty$ and $b_n/n \rightarrow b$,  we define a band Hankel matrix $H_n(t)$ as 
\begin{equation} \label{def:A_n(t)}
H_n = P_n T_n(t)  \mbox{ and }  A_n(t) := \frac{1}{\sqrt{b_n}}  H_n(t),
\end{equation}
where $T_n= (a_{i-j}(t)\delta_{|i-j|\le b_n})$ is the band Toeplitz matrix and $P_n = (\delta_{i-1, n-j} )_{i,j=1}^n$ is the backward identity permutation.

Now for each $p \in \N$, define
\begin{equation}\label{eqn:w_p(t)}
   w_{p}(t) := \frac{\sqrt{b_n}}{n}\big(\Tr(A_n(t))^{p}-\E\Tr(A_n(t))^{p}\big). 
   \end{equation}
Observe that $\Tr(A_n(t))^{p}$ is a linear eigenvalue statistics of $A_n(t)$ as defined in \eqref{eqn:LES} with test function $\phi(x)=x^{p}$. Here, $n$ is suppressed in the notation of $w_{p}(t)$ to keep the notation simple.

The following theorem describes the process convergence of $\{ w_{p}(t) ; t \geq 0\}$ for even integer $p \geq 2$, inspired from the results in \cite{m&s_toeplitz_2020}, \cite{a_m&s_circulaun2020}.
\begin{theorem}\label{thm:hankprocess}
	Suppose $ p\geq 2$ is an even positive integer and $b_n=o(n)$. Then as $n\to\infty$
	\begin{equation*}
	\{ w_{p}(t) ; t \geq 0\} \stackrel{\mathcal D}{\rightarrow} \{W_{p}(t) ;  t \geq 0\}, 
	\end{equation*}
	where 	$\{W_{p}(t);t\geq 0, p\geq 2\}$ are Gaussian with
	 mean zero and the following covariance structure:
	\begin{equation}\label{eqn:W_p(t):cov_hp}
		\Cov[ W_p(t_1), W_{q}(t_2)]
		= 2^{\frac{p+q}{2}} \sum_{r=2,4, \ldots, q}{q \choose r}t_{1}^{\frac{p+r}{2}} \left(t_{2}-t_{1}\right)^{\frac{q-r}{2}}  R(p, r) \Gamma\left(\frac{q-r}{2}+1 \right),
	\end{equation}
	with $ R(p, r)$ as in (\ref{eqn:Cov_wp_hp}), $\Gamma(\cdot)$ denotes the Gamma function and $\stackrel{\mathcal D}{\rightarrow}$ denotes the weak convergence as in Definition \ref{processconvergence}.
\end{theorem}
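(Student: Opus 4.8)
The plan is to prove the stated weak convergence (in the sense of Definition~\ref{processconvergence}) via the two classical ingredients: convergence of the finite-dimensional distributions together with tightness. For the finite-dimensional distributions I would fix finitely many pairs $(p_1,t_1),\dots,(p_m,t_m)$ with each $p_\ell\ge 2$ even and, invoking the Cram\'er--Wold device, reduce matters to proving that every real linear combination $\sum_{\ell=1}^m c_\ell\, w_{p_\ell}(t_\ell)$ is asymptotically centred Gaussian with the covariance prescribed by \eqref{eqn:W_p(t):cov_hp}. The engine of the computation is the trace--moment expansion
\[
\Tr\big(A_n(t)\big)^{p}=\frac{1}{b_n^{p/2}}\sum_{i_1,\dots,i_p=1}^{n}(H_n)_{i_1i_2}(H_n)_{i_2i_3}\cdots(H_n)_{i_pi_1},
\]
with $(H_n)_{ij}=a_{\,n+1-i-j}(t)\,\delta_{|n+1-i-j|\le b_n}$, the entry formula following from $H_n=P_nT_n$. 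The decisive structural feature is that, in contrast with the Toeplitz case, the label of the Brownian motion carried by the $(i,j)$ edge depends on the anti-diagonal $i+j$ rather than on $i-j$; every mixed moment of the $w_p$'s thus becomes a normalised sum over tuples of closed circuits whose edges are tagged by such anti-diagonal labels.

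The core of the argument is the covariance computation. Writing $t_1\le t_2$ and using the Brownian covariance $\E[a_i(s)a_j(t)]=\delta_{ij}\min(s,t)$, the essential idea is to split each later Brownian variable as $a_k(t_2)=a_k(t_1)+\big(a_k(t_2)-a_k(t_1)\big)$ into a part frozen at time $t_1$ and an increment over $[t_1,t_2]$ that is independent of everything up to $t_1$. Organising the double sum that defines $\Cov[w_p(t_1),w_q(t_2)]$ according to this split, only \emph{pair-matched} circuit pairs—in which each Brownian label is used exactly twice—survive to leading order. Selecting which $r$ of the $q$ edges of the second circuit are matched against the (time-$t_1$) first circuit produces the factor ${q\choose r}\,t_1^{(p+r)/2}$, while the remaining $q-r$ edges must pair among themselves through the independent increment, contributing the power $(t_2-t_1)^{(q-r)/2}$ together with a Gaussian-pairing constant that evaluates to $\Gamma\!\big(\tfrac{q-r}{2}+1\big)$. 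The purely combinatorial count of admissible band-restricted circuit shapes with a prescribed number of matched edges is exactly the quantity $R(p,r)$ of \eqref{eqn:Cov_wp_hp}, and the overall constant $2^{(p+q)/2}$ as well as the restriction of $r$ to even values reflect that $p$ and $q$ are even. Part of this same count is the verification that $\var\big(\Tr(A_n(t))^{p}\big)=\Theta(n^2/b_n)$, so that the normalisation $\sqrt{b_n}/n$ yields an $O(1)$ limit.

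To promote the second-moment calculation to joint Gaussianity, I would show that every joint cumulant of order three or higher of the variables $w_{p_\ell}(t_\ell)$ tends to zero. As is standard for band models, a circuit tuple that is not fully pair-matched either forces some Brownian label to be repeated at least three times, which lowers the power of $n$ gained from free summation indices, or imposes an extra anti-diagonal constraint $n+1-i-j=\text{const}$ that again costs a free index; in both cases the contribution is negligible on the scale $n^2/b_n$. Hence only the pairwise (Wick) structure persists, which is precisely the assertion that the limit is Gaussian with the covariance \eqref{eqn:W_p(t):cov_hp}.

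Finally, tightness of $\{w_p(t);t\ge 0\}$ would follow from a Kolmogorov-type increment estimate: using the same circuit expansion for the increment $w_p(t)-w_p(s)$ and the Brownian scaling of increments, one bounds $\E\big|w_p(t)-w_p(s)\big|^{2k}$ by $C_{p,k}\,|t-s|^{k}$ uniformly in $n$, which suffices. In my view the main obstacle is the Hankel combinatorics: because the edge label is $i+j$ rather than $i-j$, the constraint graphs that classify the leading-order pair-matched circuits differ genuinely from the Toeplitz analysis, and the delicate points are the exact identification of which band-restricted pairings are asymptotically non-negligible and the explicit evaluation of $R(p,r)$ together with the time integrals that generate the $\Gamma$-factor.
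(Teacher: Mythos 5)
Your proposal follows essentially the same route as the paper: finite-dimensional convergence via the Cram\'er--Wold device and the moment method (with the decomposition of $a_k(t_2)$ into the time-$t_1$ part plus an independent increment, pair-matching combinatorics yielding the $\binom{q}{r}t_1^{(p+r)/2}(t_2-t_1)^{(q-r)/2}$ structure and the $\Gamma$-factor from the even moments of the limiting Hankel band law, and vanishing of higher cumulants), together with tightness from a fourth-moment Kolmogorov-type increment bound $\E|w_p(t)-w_p(s)|^4\le M_T|t-s|^2$. The points you flag as delicate (the identification of the asymptotically surviving pairings and the evaluation of $R(p,r)$) are exactly what the paper's Lemmas~\ref{lemma: Process cov} and~\ref{lemma: process cov2} handle via the partition classes $\Delta_2$, $\tilde\Delta_2$, $\Delta_{2,4}$, so the plan is sound and consistent with the paper's proof.
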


In Theorem \ref{thm:hankprocess}, we have considered $a_0(t)\equiv 0$. 
A generalization of Theorem \ref{thm:hankprocess} for standard Brownian motion $\{a_0(t);t\ge 0\}$ is discussed in Section \ref{sec:process convergence} after the proof of Theorem \ref{thm:hankprocess}. The process convergence of $w_p(t)$ for odd positive integer $p$ is given in Remark \ref{rem:hankprocess_wp_odd_proof}.

Now we collect an existing result on fluctuations of linear eigenvalue statistics of band Hankel matrices. We first introduce some notations to state the result.

We consider an input sequence $\{ x_i : i \geq 1 \} $ of independent real random variables which satisfy the following moment assumptions:
 \begin{equation}\label{eqn:condition}
\E[x_i]=0, \quad  \E[x_i^2]=1, \ \kappa=\E [x_{j}^{4}] \,\, \forall \ i \quad \text{and} \quad \sup _{j \geq 1} \E\left[\left|x_{j}\right|^{k}\right]=\alpha_{k}<\infty \quad \text { for } \quad k \geq 3.
\end{equation}  

along with $x_0 \equiv 0$. For a sequence $b_n$, with $b_n \rightarrow \infty$ and $b_n/n \rightarrow b$,  we define the band Hankel matrix $H_n$ as $H_n = P_n T_n$ where $T_n= (\delta_{|i-j| \leq b_n}x_{i-j})$. Furthermore, we define $A_n= H_n/\sqrt{b_n}$ and for each $p \in \N, w_p$ as

\begin{equation}\label{eqn:w_p}
w_p := \frac{\sqrt{b_n}}{n} \bigl\{ \Tr(A_n)^{p} - \E[\Tr(A_n)^{p}]\bigr\}. 
\end{equation}

 In \cite{liu2012fluctuations}, Liu et al. proved the following result on fluctuations of $w_p$ for even values of $p$.

\begin{result}  \label{result: even fluctuation}(Theorem 6.4, \cite{liu2012fluctuations})
Let $H_{n}$ be a real symmetric random
Hankel band matrix whose input sequence satisfies (\ref{eqn:condition}),
and $b_{n}/n \rightarrow b \in [0,1]$ with $b_{n}\tends \infty$ as $n\rightarrow
\infty$. 

 Then for every even integer $p\geq 2$, as $n\rightarrow
\infty$
\begin{align*}
w_{p} \stackrel{d}{\rightarrow}  N(0,\sigma_p^2), 
\end{align*}
where $w_p$ as defined in (\ref{eqn:w_p}).  
Here $\sigma_p^{2}$ is appropriate constant, see \cite[Theorem 6.4]{liu2012fluctuations}. 
\end{result}

The following theorem provide the fluctuations of $w_p$ for odd value of $p$.

\begin{theorem}\label{thm:hank_odd_wp} 
Suppose  $H_n$ is a band Hankel matrix with an input sequence $\{x_i\}_{i\geq 1}$ of independent real random variables which satisfies (\ref{eqn:condition}) and $\{b_n\}$ as in Result \ref{result: even fluctuation}.
Then, for odd $p \geq 1$, as $n\rightarrow
\infty$
\begin{align*}
w_p \stackrel{d}{\rightarrow} 0.
\end{align*}
\end{theorem}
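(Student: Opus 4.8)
The plan is to prove the stronger statement that $w_p\to 0$ in $L^2$, which implies convergence in probability and hence the asserted convergence in distribution to the constant $0$ (convergence in distribution to a constant is equivalent to convergence in probability). Since $w_p$ is centred, $\E[w_p]=0$, so it suffices to show $\var(w_p)\to 0$. Using $A_n=H_n/\sqrt{b_n}$,
\[
\var(w_p)=\frac{b_n}{n^{2}}\,\var\bigl(\Tr(A_n^{p})\bigr)=\frac{1}{n^{2}b_n^{\,p-1}}\,\var\bigl(\Tr(H_n^{p})\bigr),
\]
so the goal reduces to the bound $\var(\Tr(H_n^{p}))=o(n^{2}b_n^{\,p-1})$. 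For even $p$ this variance is of order $n^{2}b_n^{\,p-1}$ (that is exactly what produces the finite limiting variance $\sigma_p^{2}$ of Result \ref{result: even fluctuation}); the content of the odd case is that this order is never attained.

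First I would expand the trace. Writing $(H_n)_{ij}=x_{\,n+1-i-j}\,\mathbf 1_{\{|n+1-i-j|\le b_n\}}$ and using the symmetry $x_{-k}=x_{k}$ of the input, one gets
\[
\Tr(H_n^{p})=\sum_{i_1,\dots,i_p}\ \prod_{k=1}^{p}x_{m_k},\qquad m_k:=n+1-i_k-i_{k+1}\ \ (i_{p+1}=i_1),
\]
so that $\var(\Tr(H_n^{p}))$ is a double sum over two closed paths $\mathbf i=(i_1,\dots,i_p)$ and $\mathbf i'=(i_1',\dots,i_p')$ of the covariances $\Cov\bigl(\prod_k x_{m_k},\prod_k x_{m_k'}\bigr)$. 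By independence and the moment conditions \eqref{eqn:condition}, such a term is nonzero only when every value occurring among the $2p$ labels is repeated, and, because of the centring, only when the two paths are \emph{connected}, i.e.\ share at least one label value $|m_k|=|m_l'|$. Pair partitions (each value used exactly twice) give the leading order, while the uniform bounds $\alpha_k<\infty$ render coincidences of multiplicity $\ge 3$ of strictly smaller order, since each extra coincidence reduces the number of distinct labels.

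The decisive input is a parity/closure relation. For a single closed path one computes $\sum_{k=1}^{p}(-1)^{k+1}(i_k+i_{k+1})=\bigl(1+(-1)^{p+1}\bigr)i_1$, hence, with $m_k=n+1-(i_k+i_{k+1})$,
\[
\sum_{k=1}^{p}(-1)^{k+1}m_k=\begin{cases}0,& p\text{ even},\\ (n+1)-2i_1,& p\text{ odd}.\end{cases}
\]
For even $p$ this is a homogeneous relation among the labels alone, so the anchor $i_1$ remains free over $\sim n$ values; with two free anchors and $p-1$ free band-restricted labels one recovers the order $n^{2}b_n^{\,p-1}$. For odd $p$, by contrast, the relation \emph{determines} $i_1$ from the labels, and since each $|m_k|\le b_n$ it confines $i_1$ to a window of size $O(b_n)$ about $(n+1)/2$; crucially, no pairing pattern can undo this, because the closure constraint is internal to each cycle (its coefficient of $i_1$ is $-2\ne 0$). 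Thus for odd $p$ neither path supplies a free anchor ranging over $\sim n$ values.

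I would then count degrees of freedom. A connected configuration is specified by a pairing and sign pattern (finitely many for fixed $p$) together with the at most $p$ distinct label values, each ranging over $O(b_n)$ choices; once these are chosen, the two closure relations determine both anchors $i_1,i_1'$, and the recursion then fixes all remaining vertices. Hence the number of contributing pairs of paths is $O(b_n^{\,p})$, giving $\var(\Tr(H_n^{p}))=O(b_n^{\,p})$ and
\[
\var(w_p)=\frac{1}{n^{2}b_n^{\,p-1}}\,O\bigl(b_n^{\,p}\bigr)=O\!\left(\frac{b_n}{n^{2}}\right)\longrightarrow 0,
\]
since $b_n\le n$. Therefore $w_p\to 0$ in $L^2$, and $w_p\stackrel{d}{\rightarrow}0$ follows. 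The main obstacle is making this counting rigorous and uniform: one must verify that higher-multiplicity coincidences and the boundary restrictions $i_k\in[1,n]$ genuinely contribute only lower-order terms, and that the anchor-pinning really forbids \emph{every} connected pattern—not merely the obvious ones—from recovering a factor $n$, while tracking the sign ambiguity $m_k=\pm m_l'$ from the symmetric input against the $O(b_n)$ windows. The remainder is the standard moment-method estimate.
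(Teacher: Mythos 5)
Your proposal is correct and follows essentially the same route as the paper: both prove $\var(w_p)\to 0$ by the moment method, the decisive point in each being that for odd $p$ the closure constraint $\sum_{k}(-1)^{k}j_{k}=2i-1-n$ ties the diagonal index to the band labels, so no anchor ranges freely over $\sim n$ values. The only difference is bookkeeping: the paper fixes the anchors $(i_1,i_2)$ first and runs a case analysis ($i_1+i_2=n+1$, $i_1=i_2$, etc.) to get $O(1/n)+O(1/b_n)$, whereas you fix the paired label values first and let the two closure relations determine the anchors, arriving at the equivalent (in fact slightly sharper) bound $O(b_n/n^{2})$.
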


Now we briefly outline the rest of the paper. In Section \ref{preliminaries}, first we state trace formula of Hankel matrices and then some basic results of process convergence.
In Section \ref{sec:partition}, we introduce some standard partitions and integrals.
 In Section \ref{sec:process convergence}, we prove Theorem \ref{thm:hankprocess} by using moment method and some results on process convergence. Finally, we prove Theorem \ref{thm:hank_odd_wp} in Section \ref{sec:thm:hank_odd_wp}.

\section{Preliminaries}\label{preliminaries}

     Here we introduce some results and definitions which will be used in the proof of theorems.  First we state the trace formula for band Hankel matrices. 
\begin{result} [Lemma 6.3, \cite{liu2012fluctuations}] \label{res:trace_Hn} 
Suppose  $H_n$ is a band Hankel matrix with an input sequence $\{x_i\}_{i\geq 1}$. Then,
\begin{align*} \label{def:trace_Hn}
\Tr(H_n)^p  
  = \begin{cases} 
    \displaystyle \sum_{i=1}^{n} \sum_{j_{1}, \ldots, j_{p}=-b_{n}}^{b_{n}} \prod_{\ell=1}^{p} x_{j_{\ell}} \prod_{\ell=1}^{p} \chi_{[1, n]}\left(i-\sum_{q=1}^{\ell}(-1)^{q} j_{q}\right) \delta_{0, \sum_{q=1}^{p}(-1)^{q} j_{q}}, &p \quad \text{even}; \\
     \displaystyle  \sum_{i=1}^{n} \sum_{j_{1}, \ldots, j_{p}=-b_{n}}^{b_{n}} \prod_{\ell=1}^{p} x_{j_{\ell}} \prod_{\ell=1}^{p} \chi_{[1, n]}\left(i-\sum_{q=1}^{l}(-1)^{q} j_{q}\right) \delta_{2 i-1-n, \sum_{q=1}^{p}(-1)^{q} j_{q}}, & p \quad \text{odd}. 
   \end{cases}
\end{align*}
\end{result}

    Now we see some standard results on process convergence.

Let $( C_{\infty},\mathcal{C_{\infty}} )$ be a probability measure space, where $C_{\infty}:= C[0, \infty)$ is the space of all real-valued continuous functions on $[0, \infty)$ and $\mathcal{C_{\infty}}$ is a $\sig$-field generated by open sets of $C_{\infty}$. For more details about $( C_{\infty},\mathcal{C_{\infty}} )$ and $\sig$-field, see \cite{m&s_toeplitz_2020}.

\begin{definition}\label{processconvergence}
 Suppose $\{\boldsymbol{X_n}\}_{n\geq 1}$ = $\{ X_n(t); t \geq 0 \}_{n\geq 1}$ and $\boldsymbol{X}=\{X(t);t\ge 0\}$ are real-valued continuous process on $C[0, \infty)$.
 We say $\boldsymbol{X_n}$ converge to $\boldsymbol{X}$ \textit{weakly} or in \textit{distribution}, denote by $\boldsymbol{X_n} \stackrel{\mathcal D}{\rightarrow} \boldsymbol{X}$ if $\P_n \stackrel{\mathcal D}{\rightarrow} \P $, that is,

$$ \P_n f :=  \int_{C_{\infty}} f d \P_n \longrightarrow \P f := \int_{C_{\infty}} f d \P \ \mbox{ as }n\to\infty,$$
 for every bounded, continuous real-valued function $f$ on $C_{\infty}$, where $\P_n$ and $\P$ are the probability measures on $( C_{\infty},\mathcal{C_{\infty}} )$ induced by $\boldsymbol{X_n}$ and $\boldsymbol X$, respectively.
 \end{definition}
 
 Suppose $\{\boldsymbol{X_n}\}_{n\geq 1}$ = $\{ X_n(t); t \geq 0 \}_{n\geq 1}$ and $\boldsymbol{X}$ = $\{ X(t); t \geq 0 \}$ are sequences of continuous processes on $C[0, \infty)$.
Then  $\boldsymbol{X_n} \stackrel{\mathcal D}{\rightarrow} \boldsymbol{X}$ if and only if: \\
	\noindent \textbf{(i)} \textbf{Finite dimensional convergence:} Suppose $0<t_1<t_2 \ldots <t_r$. Then as $n \tends \infty$
	\begin{equation*} \label{eqn:finite_dim}
	(X_{n}(t_1), X_{n}(t_2), \ldots , X_{n}(t_r)) \stackrel{\mathcal D}{\rightarrow} (X(t_1), X(t_2), \ldots , X(t_r)),
	\end{equation*}	
	\vskip3pt
	\noindent \textbf{(ii)} \textbf{Tightness:} the sequence  $\{\boldsymbol{X_n}\}_{n\geq 1}$ is tight. 

Take $X_n(t)$ = $w_{p}(t)$ where $w_{p}(t)$ is as defined in (\ref{eqn:w_p(t)}). From the above discussion and \cite[Theorem I.4.3]{Ikeda1981}, it is clear that Theorem \ref{thm:hankprocess} follows from the following two propositions:
 
\begin{proposition} \label{pro:finite convergence}
	For each $ p\geq 2$, suppose $0<t_1<t_2 \ldots <t_r$. Then as $n \tends \infty$
	\begin{equation*}
	(w_{p}(t_1), w_{p}(t_2), \ldots , w_{p}(t_r)) \stackrel{\mathcal D}{\rightarrow} (W_{p}(t_1), W_{p}(t_2), \ldots , W_{p}(t_r)).
	\end{equation*}	
\end{proposition}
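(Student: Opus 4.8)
The plan is to prove finite-dimensional convergence by the \emph{method of moments} together with the Cram\'er--Wold device; throughout $p$ is even, so that the limiting process $W_p$ of Theorem \ref{thm:hankprocess} is defined. Fix $0<t_1<\cdots<t_r$ and reals $\alpha_1,\dots,\alpha_r$, and set $S_n:=\sum_{k=1}^r\alpha_k\,w_p(t_k)$. It suffices to show $S_n\convd N(0,\sigma^2)$ with $\sigma^2:=\sum_{k,\ell}\alpha_k\alpha_\ell\,\Cov[W_p(t_k),W_p(t_\ell)]$ given by \eqref{eqn:W_p(t):cov_hp} (with $q=p$). As the target is Gaussian, I would verify that $\E[S_n]=0$ by the centring, that $\E[S_n^2]\to\sigma^2$, and that $\E[S_n^m]\to(m-1)!!\,\sigma^m$ for even $m\ge 3$ while $\E[S_n^m]\to 0$ for odd $m$; these are exactly the Gaussian moments, and Carleman's condition guarantees that they determine the limit uniquely, whereupon Cram\'er--Wold upgrades the conclusion to the joint convergence asserted in Proposition \ref{pro:finite convergence}.

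Each moment is opened up with the trace formula of Result \ref{res:trace_Hn}. Since $p$ is even, $\Tr(A_n(t))^p=b_n^{-p/2}\sum_{i,j_1,\dots,j_p}\prod_{\ell}a_{j_\ell}(t)$ restricted by the range indicators $\chi_{[1,n]}$ and the Hankel constraint $\delta_{0,\sum_q(-1)^qj_q}$. Expanding $\E\big[\prod_s w_p(t_{k_s})\big]$ then gives a sum over tuples of index vectors, one vector per trace factor, weighted by expectations of products of the Brownian entries. Because $\{a_j(t)\}$ is a jointly Gaussian family, each such expectation is evaluated exactly by Wick's theorem as a sum over pairings of the $a$-variables, using $\E[a_i(s)a_j(u)]=\delta_{ij}\min(s,u)$ and the symmetry relations that render $H_n(t)$ symmetric; the centring in \eqref{eqn:w_p(t)} cancels, at the level of the mean, the pairings that stay internal to a single factor.

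The heart of the matter is to extract the leading order in $n$. I would sort the tuples by the partition of the $a$-variables induced by the Wick pairing and perform a dimension count: every forced coincidence $j_\ell=j_{\ell'}$, together with the alternating-sum conditions and the backward-permutation relations coming from $P_n$, removes a free summation index, while the normalization contributes $(\sqrt{b_n}/n)^m$. As in the band-Toeplitz analyses of \cite{m&s_toeplitz_2020,a_m&s_circulaun2020}, the tuples that saturate this count are exactly those in which the $a$-variables pair \emph{across} factors so that the $m$ factors are grouped into disjoint pairs; patterns that link three or more factors, or that pair internally, leave strictly fewer free indices and vanish as $b_n\to\infty$ with $b_n=o(n)$. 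This simultaneously forces the odd moments to $0$ and factorizes the even moments as $\sum_{\text{pairings}}\prod\Cov$, which is the Wick formula and hence identifies the Gaussian limit.

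It remains to compute the surviving pairwise contribution, which reproduces \eqref{eqn:W_p(t):cov_hp} and is where I expect the real work to lie. For two factors at times $t_1<t_2$ carrying powers $p$ and $q$ (here $q=p$), I would split every entry of the second factor into independent increments, $a_j(t_2)=a_j(t_1)+(a_j(t_2)-a_j(t_1))$. Assigning $r$ of the $q$ entries of the second factor to pair, through their $a_j(t_1)$ part, into the common time-$t_1$ cluster together with all $p$ entries of the first factor---contributing $t_1^{(p+r)/2}$ over $(p+r)/2$ pairs---while the remaining $q-r$ entries self-pair within the increment over $(t_1,t_2]$---contributing $(t_2-t_1)^{(q-r)/2}$---accounts for the binomial $\binom{q}{r}$ and the two powers of time; here $r\ge 2$ is exactly the requirement that the two factors genuinely cross, which is what centring leaves behind, explaining the range $r=2,4,\dots,q$. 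The number of admissible Hankel pairings compatible with the alternating-sum and $P_n$ constraints produces the constant $R(p,r)$ of \eqref{eqn:Cov_wp_hp} (together with the prefactor $2^{(p+q)/2}$), and the factor $\Gamma\big(\tfrac{q-r}{2}+1\big)$ emerges in the limit from the self-contractions inside the increment once the constrained sums over the rescaled band indices are replaced by integrals. The principal obstacle is precisely this count for the Hankel geometry---where $P_n$ couples the indices differently than in the Toeplitz case---and the accompanying verification that every lower-order pairing pattern is genuinely negligible under $b_n=o(n)$.
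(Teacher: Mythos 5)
Your proposal is correct and follows essentially the same route as the paper: Cram\'er--Wold plus the method of moments, with the covariance $\lim_n\E[w_p(t_1)w_q(t_2)]$ computed by splitting the later-time entries into independent increments, which produces the $\binom{q}{r}t_1^{(p+r)/2}(t_2-t_1)^{(q-r)/2}$ structure, the pairing counts $R(p,r)$, and the $\Gamma(\frac{q-r}{2}+1)$ factor (which the paper obtains as a moment of the limiting Hankel spectral law). The only presentational difference is that the paper isolates the covariance computation in Lemmas \ref{lemma: Process cov} and \ref{lemma: process cov2} and then cites Theorem 1.4 of \cite{li_sun_2015} for the higher-moment Wick factorization that you sketch directly.
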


\begin{proposition} \label{pro:tight}
	For each $p \geq 2$, there exists positive constants $M$ and $\gamma$ such that
	\begin{equation*} \label{eq:tight3}
	\E{|w_{p}(0)|^ \gamma } \leq M \ \ \ \forall \ n \in \mathbb{N}, 	 
	\end{equation*}
	and there exists positive constants $\alpha, \ \beta $ and $M_T$, $T= 1, 2, \ldots,$ such that 
	\begin{equation*} \label{eq:tight4}
	\E{|w_{p}(t)- w_{p}(s)|^ \alpha } \leq M_T |t-s|^ {1+ \beta} \ \ \ \forall \ n \in \mathbb{N} \ \mbox{and } t,s \in[0,\ T],(T= 1, 2, \ldots). 
	\end{equation*}
	Then $\{ w_{p}(t) ; t \geq 0 \}$ is tight.
\end{proposition}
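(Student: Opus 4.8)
The plan is to verify the two displayed moment estimates for the concrete choice $\gamma=\alpha=4$ and $\beta=1$; the passage from these estimates to tightness is then precisely the Kolmogorov--Chentsov type criterion quoted as \cite[Theorem I.4.3]{Ikeda1981}, so no separate tightness argument is required beyond the two bounds. The first estimate is immediate: each $a_j(\cdot)$ is a standard Brownian motion started at the origin, so $a_j(0)=0$ for every $j$, whence $A_n(0)$ is the zero matrix and $w_p(0)=0$ almost surely. Thus $\E|w_p(0)|^4=0\le M$ for any $M>0$, and it remains only to establish $\E|w_p(t)-w_p(s)|^4\le M_T|t-s|^2$ for $t,s\in[0,T]$.

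For this I would set $Y:=\Tr(A_n(t))^p-\Tr(A_n(s))^p$, so that $w_p(t)-w_p(s)=\frac{\sqrt{b_n}}{n}(Y-\E Y)$ and hence $\E|w_p(t)-w_p(s)|^4=\frac{b_n^2}{n^4}\E|Y-\E Y|^4$. Applying the trace formula of Result \ref{res:trace_Hn} with $x_{j_\ell}$ replaced by $a_{j_\ell}(\cdot)$ and using $A_n=H_n/\sqrt{b_n}$, I would expand $Y$ as $b_n^{-p/2}$ times a sum over admissible index chains $(i,j_1,\dots,j_p)$ of the telescoped differences $\prod_\ell a_{j_\ell}(t)-\prod_\ell a_{j_\ell}(s)$. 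Writing $a_j(t)=a_j(s)+\Delta_j$ with $\Delta_j:=a_j(t)-a_j(s)$, each such difference becomes a sum of monomials each of which carries at least one increment factor $\Delta_j$; the key probabilistic inputs are that $\Delta_j$ is independent of the values at time $s$, that the $\Delta_j$ are independent and mean zero across distinct $j$, and that $\E\Delta_j^{2k}=(2k-1)!!\,(t-s)^k$.

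The core of the argument is the estimate of $\E|Y-\E Y|^4$. I would raise the centred sum to the fourth power, producing a sum over four index chains, and evaluate expectations term by term. As in the moment computation behind the CLT for even $p$, a term survives only when every Brownian variable occurs with even multiplicity, so the contributing configurations are indexed by pair partitions of the $4p$ labels, and the centring by $\E Y$ removes every configuration that contains no increment factor. Independence of the increments from the past together with the moment identity above then converts each paired block of $\Delta_j$'s into a power of $(t-s)$. Carrying out the same chain/partition bookkeeping that produces the covariance coefficients $R(p,r)$, I expect the dominant configurations to carry exactly two factors of $(t-s)$ and to number $O(n^4 b_n^{2p-2})$ in total, with configurations admitting fewer increment pairs eliminated by the centring and those with more being of higher order in $(t-s)$. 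Combined with the prefactor $\frac{b_n^2}{n^4}\,b_n^{-2p}$, this yields $\E|w_p(t)-w_p(s)|^4\le M_T|t-s|^2$, the constant $M_T$ absorbing the moments of the time-$s$ values $a_j(s)$, which stay bounded for $s\le T$.

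The main obstacle is exactly this combinatorial fourth-moment estimate: one must establish simultaneously that the number of admissible chains is $O(n^4 b_n^{2p-2})$, so that the normalization leaves a constant uniform in $n$, and that the centring forces at least two independent increment pairings, so that the exponent of $(t-s)$ is at least two (anything smaller would fail to give a Kolmogorov exponent $1+\beta>1$). This is where the band restriction ($b_n\to\infty$, $b_n/n\to b$) together with the Hankel constraints $\delta_{0,\sum(-1)^q j_q}$ and $\delta_{2i-1-n,\sum(-1)^q j_q}$ control the index count, and it is a more delicate version of the variance bookkeeping already required for the finite-dimensional convergence in Proposition \ref{pro:finite convergence}.
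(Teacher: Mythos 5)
Your plan follows the paper's proof essentially step for step: the paper also takes $\alpha=4$, $\beta=1$, expands $\Tr(A_n(t))^p-\Tr(A_n(s))^p$ binomially into terms each carrying at least one increment factor (your telescoping is the same decomposition), uses that $a_j(t)-a_j(s)$ is distributed as $N(0,t-s)$ to extract one factor of $\sqrt{t-s}$ from each of the four factors in the fourth moment, and reduces everything to a combinatorial count of index configurations. Two remarks. First, the count you flag as the main obstacle is closed in the paper by a connectedness case analysis on the four index vectors $J^1_p,\dots,J^4_p$: if some $J^k_p$ shares no entry with the others, independence plus the centring kills the term; if they split into two connected pairs, the contributing configurations number $|B_{P_2}|^2=O(b_n^{2p-2})$; if all four are connected, $|B_{P_4}|=o(b_n^{2p-2})$ --- in every case matching the normalization $n^4 b_n^{2p-2}$, so the bound $M_T(t-s)^2$ follows. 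Second, there is a misattribution in your mechanism that is worth correcting before you execute the plan: the exponent $2$ on $(t-s)$ does \emph{not} come from the centring. After telescoping, every monomial already contains an increment, so each of the four factors contributes at least one $\Delta_j$; the even-multiplicity requirement then forces at least two increment pairings among four or more increment factors, which is what yields $(t-s)^2$. What the centring actually buys is the elimination of the disconnected configurations (Case I above): without it, terms in which one $J^k_p$ is fully self-paired and independent of the rest would contribute on the order of $b_n^{2p-1}$ configurations, and the normalized fourth moment would diverge rather than merely fail to gain a power of $(t-s)$. Your treatment of the $t=0$ condition ($w_p(0)=0$ almost surely) is correct and is left implicit in the paper.
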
 

  \section{Some partitions and integrals}\label{sec:partition}

In this section, we introduce some partitions and integrals which will be required for the proof of Theorem \ref{thm:hankprocess}.
\begin{definition}\label{def: matched element}
 Let $J_1 \in \mathbb{R}^p, J_2 \in \mathbb{R}^q$, with $J_1=(j_1, \ldots , j_p)$ and $J_2 = (j_{p+1}, \ldots , j_{p+q})$. Consider two components $j_{k_1}$ and $j_{k_2}$ with $j_{k_1}= j_{k_2}$. Then
 \begin{enumerate}
 \item[(i)]   $j_{k_1}$ and $j_{k_2}$ are said to be \textit{self-matched in} $J_1$ if $1\leq k_1,k_2 \leq p$.
 \item[(ii)] $j_{k_1}$ and $j_{k_2}$ are said to be \textit{self-matched in} $J_2$ if $p+1\leq k_1,k_2 \leq p+q$.
 \item[(iii)] $j_{k_1}$ and $j_{k_2}$ are said to be \textit{cross-matched} if $1\leq k_1 \leq p< k_2 \leq p+q$ or $1\leq k_2 \leq p< k_1 \leq p+q$.
 \end{enumerate}
\end{definition} 

\begin{definition}
   Consider the set $[n]=\{ 1,2 ,\ldots , n\}$. A partition $\pi$ of $[n]$ is called a \textit{pair-partition} if each equivalency class of $\pi$ has exactly two elements. If $i,j$ belongs to the same equivalency class of $\pi$, we write $i \sim_{\pi} j$. The set of all pair-partitions of $[n]$ is denoted by $\PP_2(n)$. Clearly, $\PP_2(n)= \emptyset$ for odd n.
\end{definition}

\begin{definition}
    Consider an even number $n$. A pair-partition $\pi \in \PP_2(n)$ is called an \textit{odd-even pair partition} if for all $i,j$ such that $i \sim_{\pi} j$ and $i \neq j$, one of $\{ i,j \}$ is odd and the other is even. The set of all odd-even pair partitions of $[n]$ is denoted by $\Delta_2(n)$. For odd $n$, $\Delta_2(n)= \emptyset$.
\end{definition}

\begin{definition}
    Consider $p,q \in \N$ such that $p+q$ is even. We define the set $\Delta_2(p,q) \subset \PP_2(p+q)$ in the following way:
    \begin{enumerate}[(i)]
        \item Any $\pi \in \Delta_2(p+q)$ is an odd-even pair partition.
        \item For every $\pi \in \Delta_2(p+q),$ there exists $i,j$ such that $i \leq p <j$, such that $i \sim_{\pi} j$.
    \end{enumerate}
For $p,q$ such that $p+q$ is odd, $\Delta_2(p+q)$ is taken to be the empty set.
\end{definition}

Consider, for fixed natural numbers $p$ and $q$, $J=(j_1,j_2, \ldots , j_{p+q}) \in \mathbb{R}^{p+q}$. Then $J$ can be thought of as a 2-tuple $J=(J_1,J_2)$ where $J_1=(j_1,j_2, \ldots , j_p)$ and $J_2=(j_{p+1},j_{p+2}, \ldots , j_{p+q})$. Thus, taking cue from Definition \ref{def: matched element}, we can introduce the terms self-matched and cross-matched for any vector $J$. 
A closer observation yields that the concept of self-matching (and cross-matching) depends only on the relationship between the components and not on the numerical value of the pair. Therefore the concept has a natural extension to any partition $\pi$ of $[p+q]$. 

\begin{definition}
    Consider $\PP_2(p,q)$, the subset of $\PP_2(p+q)$ of elements having at least one cross-matched pair. Consider the set of all $\pi \in \PP_2(p,q)$ such that
    \begin{enumerate}[(i)]
        \item every self-matched pair of $\pi$ is an odd-even pair,
        \item every cross-matched pair of $\pi$ is either an even-even pair or an odd-odd pair.
    \end{enumerate}
    The set of all such $\pi$ is denoted by $\tilde{\Delta}_2(p,q)$.
\end{definition}

We also define the following important partition of $[p+q]$ for natural numbers $p$ and $q$.
\begin{definition}
    Let $\pi =\{ V_1 \ldots, V_r \}$ be a partition of $[p+q]$. We say $\pi \in \Delta_{2,4}(p,q)$ if the following conditions are satisfied:
    \begin{enumerate}[(i)]
        \item there exists such that $|V_i|=4$ and $|V_j|=2$ for all $j\neq i$,
        \item for each $j \neq i$, one element of $V_j$ is odd and the other is even,
        \item for each $j \neq i$, $V_{j} \subset\{1, \ldots, p\}$ or $V_{j} \subset\{p+1, \ldots, p+q\}$,
        \item two elements of $V_i$ come from $\{1, \ldots, p\}$ and they form an odd-even pair. The other two elements coming from $\{p+1, \ldots, p+q\}$ also forms an odd-even pair.
    \end{enumerate}
\end{definition}
 
For a partition $\pi$ and a set of unknowns $\{x_i \}$, we can construct a new set of unknowns $\{y_i \}$ by defining $y_i = x_{\pi(i)}$. Now we define some integrals associated with the partition.

 \begin{definition} \label{def:f_I}
 
    \begin{enumerate}[(i)]
        \item  For $\pi \in \Delta_2(p,q)$, define 
            \begin{align*}
            f_{I}^{-}\left(\pi\right)=& \int_{[0,1]^{2}} d y_{0} d x_{0} \int_{\left[-1, 1\right]^{\frac{p+q-2}{2}}} d x_{1} dx_{2} \cdots d x_{(p+q) / 2} \\
            & \times \delta\left( \sum_{i=1}^{p}(-1)^i y_{i}\right) \prod_{j=1}^{p} I_{[0,1]}\left(x_{0}+b \sum_{i=1}^{j} (-1)^i y_{i}\right) \prod_{j^{\prime}=p+1}^{p+q} I_{[0,1]}\left(y_{0}+b \sum_{i=p+1}^{j^{\prime}}  (-1)^i y_{i}\right).
            \end{align*}
        \item For $\pi \in \tilde{\Delta}_2(p,q)$, define 
            \begin{align*}
            f_{I}^{+}\left(\pi \right)=& \int_{[0,1]^{2}} d y_{0} d x_{0} \int_{\left[-1, 1\right]^{\frac{p+q-2}{2}}} d x_1 d x_2  \cdots d x_{(p+q) / 2} \\
            & \times \delta\left(\sum_{i=1}^{p} (-1)^i y_{i}\right) \prod_{j=1}^{p} I_{[0,1]}\left(x_{0}+b \sum_{i=1}^{j}(-1)^i y_{i}\right) \prod_{j^{\prime}=p+1}^{p+q} I_{[0,1]}\left(y_{0}-b \sum_{i=p+1}^{j^{\prime}} (-1)^i y_{i}\right).
            \end{align*}
        \item For $\pi \in \Delta_2(p,q)$, define 
            \begin{align*}
            f_{I I}^{-}\left(\pi \right)=& \int_{[0,1]^{2}} d y_{0} d x_{0} \int_{\left[-1, 1\right]^{\frac{p+q-2}{2}}} d x_{1} \cdots d x_{\frac{p+q}{2}-1} \\
            & \times \prod_{j=1}^{p} I_{[0,1]}\left(x_{0}+b \sum_{i=1}^{j} (-1)^i y_{i}\right) \prod_{j^{\prime}=p+1}^{p+q} I_{[0,1]}\left(y_{0}+b \sum_{i=p+1}^{j^{\prime}} (-1)^i y_{i}\right),
            \end{align*}
            \begin{align*}
            f_{I I}^{+}(\pi)=& \int_{[0,1]^{2}} d y_{0} d x_{0} \int_{\left[-1, 1 \right]^{p / 2}} d x_{1} \cdots d x_{\frac{p+q}{2}-1} \\
            & \times \prod_{j=1}^{p} I_{[0,1]}\left(x_{0}+b \sum_{i=1}^{j} (-1)^i y_{i}\right) \prod_{j^{\prime}=p+1}^{p+q} I_{[0,1]}\left(y_{0}-b \sum_{i=p+1}^{j^{\prime}} (-1)^i y_{i}\right).
            \end{align*}
    \end{enumerate}
 \end{definition}

  \section{Proof of Theorem \ref{thm:hankprocess}}\label{sec:process convergence}
First recall that Propositions \ref{pro:finite convergence} and \ref{pro:tight} yield Theorem \ref{thm:hankprocess}. In Subsection \ref{subsec:finite dim}, we prove Proposition \ref{pro:finite convergence} and in \ref{subsec:tight}, we prove Proposition \ref{pro:tight}. Throughout this section we assume that $b_n=o(n)$, which gives $b= \lim_{n \rightarrow \infty } b_n/n =0$.
\subsection{ Proof of Proposition \ref{pro:finite convergence}}\label{subsec:finite dim}

 We first start with the following two lemmata which will used in the proof of Proposition \ref{pro:finite convergence}.

\begin{lemma}\label{lemma: Process cov}
Suppose $0<t_1 \leq t_2$, $p$ and $q$ are even natural numbers. For $j \in \mathbb{Z}$, let $u_j=a_j(t_1)$ and $v_j=a_j(t_2)-a_j(t_1)$. Then
\begin{align} \label{eqn:E[wp,wq_hp}
    \displaystyle \E \left[w_{p}\left(t_{1}\right) w_{q}\left(t_{2}\right)\right] & = b_{n}^{-\frac{p+q}{2}+1} \sum_{r=0}^{q}{q \choose r}  \sum_{ J=(j_{1}, \ldots, j_{p}) \in A_p , \atop J'  = (j_{1}^{\prime}, \ldots, j_{q}^{\prime}) \in A_q}
    \left( \E [u_{j_{1}} \cdots u_{j_{p}} u_{j_{i}^{\prime}} \cdots u_{j_{r}^{\prime}} v_{j_{r+1}^{\prime}} \cdots v_{j_{q}^{\prime}}]\right. \nonumber\\
& \qquad \displaystyle  \left. - \E [u_{j_{1}} \cdots u_{j_{p}}] \E[u_{j_{i}^{\prime}} \cdots u_{j_{r}^{\prime}} v_{j_{r+1}^{\prime}} \cdots v_{j_{q}^{\prime}}]\right) +o(1), 
\end{align}
where for each $p \in \mathbb{N},$ $A_{p}$ is defined as
   \begin{equation}\label{def:A_{p}}
    A_{p}=\left\{\left(j_{1}, \ldots, j_{p}\right) \in\left\{\pm 1, \ldots, \pm b_{n}\right\}^{p}: \sum_{k=1}^{p} (-1)^{k} j_{k}=0\right\}.
\end{equation}
\end{lemma}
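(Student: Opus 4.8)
The plan is to substitute the even case of the trace formula (Result \ref{res:trace_Hn}) into the definition \eqref{eqn:w_p(t)} and then organise the resulting Gaussian moment computation. \textbf{Step 1 (reduction to $A_p,A_q$).} Since $A_n(t)=b_n^{-1/2}H_n(t)$ and $a_0(t)\equiv 0$, every term of the even trace formula that carries a factor $a_0(t)$ vanishes, so the index $J=(j_1,\dots,j_p)$ may be restricted to $\{\pm1,\dots,\pm b_n\}^p$; together with the factor $\delta_{0,\sum_m(-1)^m j_m}$ this is exactly the constraint defining $A_p$ in \eqref{def:A_{p}}. Hence
\[
\Tr(A_n(t))^p=b_n^{-p/2}\sum_{i=1}^n\ \sum_{J\in A_p}\ \prod_{\ell=1}^p a_{j_\ell}(t)\ \prod_{\ell=1}^p\chi_{[1,n]}\Big(i-\sum_{m=1}^{\ell}(-1)^m j_m\Big),
\]
and similarly for $q$ with an independent index $i'$ and $J'\in A_q$.

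\textbf{Step 2 (the covariance and its prefactor).} Multiplying the two centred expressions and taking expectations, the prefactors combine to $b_n^{-\frac{p+q}{2}+1}/n^2$ and the two centerings produce
\[
\E[w_p(t_1)w_q(t_2)]=\frac{b_n^{-\frac{p+q}{2}+1}}{n^2}\sum_{i,i'=1}^n\ \sum_{J\in A_p,\,J'\in A_q}\Big(\prod_{\ell}\chi_{[1,n]}(\cdots)\Big)\Big(\prod_{\ell}\chi_{[1,n]}(\cdots)\Big)\,\mathcal C(J,J'),
\]
where $\mathcal C(J,J')=\E[\prod_\ell a_{j_\ell}(t_1)\prod_\ell a_{j'_\ell}(t_2)]-\E[\prod_\ell a_{j_\ell}(t_1)]\,\E[\prod_\ell a_{j'_\ell}(t_2)]$. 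The $\chi$-factors attached to $(i,J)$ and to $(i',J')$ are disjoint, so the $i$- and $i'$-sums factorise.

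\textbf{Step 3 (removing the $\chi$-sums).} For fixed $J\in A_p$ every partial sum $\sum_{m=1}^\ell(-1)^m j_m$ is $O(b_n)=o(n)$, so the constraints $1\le i-\sum_{m\le\ell}(-1)^m j_m\le n$ cut out an interval of length $n+O(b_n)$; thus $\sum_{i=1}^n\prod_\ell\chi_{[1,n]}(\cdots)=n\big(1+O(b_n/n)\big)$ uniformly in $J$, and likewise in $i'$. The bracketed $\chi$-sums therefore contribute $n^2\big(1+O(b_n/n)\big)$, giving
\[
\E[w_p(t_1)w_q(t_2)]=b_n^{-\frac{p+q}{2}+1}\sum_{J\in A_p,\,J'\in A_q}\mathcal C(J,J')+\mathcal E_n,\qquad |\mathcal E_n|\le O(b_n/n)\,b_n^{-\frac{p+q}{2}+1}\sum_{J,J'}|\mathcal C(J,J')|.
\]
A Wick/pairing count (made precise in the remainder of this subsection) gives $b_n^{-\frac{p+q}{2}+1}\sum_{J,J'}|\mathcal C(J,J')|=O(1)$: the centering cancels all pairings in which the $J$- and $J'$-indices match among themselves, leaving only configurations with at least one cross-matched pair, and for these the two alternating-sum constraints become dependent, so they contribute at order $b_n^{\frac{p+q}{2}-1}$. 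Since $b_n=o(n)$ this yields $\mathcal E_n=o(1)$.

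\textbf{Step 4 (independent increments and the binomial factor).} Finally put $u_j=a_j(t_1)$, $v_j=a_j(t_2)-a_j(t_1)$; by the independent-increments property of Brownian motion these are independent, and $\prod_{\ell=1}^q a_{j'_\ell}(t_2)=\sum_{S\subseteq[q]}\prod_{\ell\in S}u_{j'_\ell}\prod_{\ell\notin S}v_{j'_\ell}$. Substituting this into $\mathcal C(J,J')$, the term $S=\emptyset$ cancels against the subtracted mean; grouping the remaining terms by $r=|S|$ and relabelling the coordinates of $J'$ (using the symmetry $j\leftrightarrow -j$ of the range $\{\pm1,\dots,\pm b_n\}$ to replace each size-$r$ set $S$ by $\{1,\dots,r\}$, so that all $\binom qr$ choices contribute equally) produces precisely \eqref{eqn:E[wp,wq_hp}. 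I expect the two delicate points to be the uniform $\chi$-sum estimate together with the $O(1)$ bound of Step 3, and the symmetrization yielding the factor $\binom qr$ in Step 4, the latter being the main combinatorial obstacle.
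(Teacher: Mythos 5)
Your proposal is correct and is essentially the argument the paper itself invokes: the paper gives no details for this lemma, merely deferring to Lemma 4.1 of Li and Sun, and your four steps (trace formula, factorisation and removal of the $\chi$-sums using $b_n=o(n)$, the Wick-type $O(b_n^{\frac{p+q}{2}-1})$ bound on cross-matched configurations, and the independent-increments expansion) reconstruct that proof in the Hankel setting. You also correctly isolate the one genuinely Hankel-specific point, namely that the binomial factor $\binom{q}{r}$ requires symmetrising over $S$ via coordinate permutations combined with sign flips $j\leftrightarrow-j$ (legitimate here since $a_{-k}=a_k$ and the alternating-sign constraint defining $A_q$ is preserved), which the permutation-invariant Toeplitz constraint does not need.
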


\begin{proof}[Proof] The proof is similar to the proof of Lemma $4.1$ of \cite{li_sun_2015}. So, we skip the details. 
\end{proof}
\begin{lemma}\label{lemma: process cov2}
If $0 < t_1\leq t_2$ and $p, q \geq 2$ be even natural numbers, then
\begin{equation} \label{eqn:Cov_wp_hp}
\lim_{n \to \infty} \E[w_p(t_1)w_q(t_2)]= 2^{\frac{p+q}{2}} \sum_{r=2,4, \ldots, q}{q \choose r}t_{1}^{\frac{p+r}{2}} \left(t_{2}-t_{1}\right)^{\frac{q-r}{2}}  R(p, r) \Gamma\left(\frac{q-r}{2}+1 \right),
 \end{equation}
 where $R(p,r)=\left|\Delta_{2}(p, q)\right|+\left|\tilde{\Delta}_{2}(p, r)\right|+2\left|\Delta_{2,4}(p, r)\right|$ and $\Gamma$ denotes the Gamma function.
\end{lemma}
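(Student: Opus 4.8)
The plan is to start from the exact expression for $\E[w_p(t_1)w_q(t_2)]$ supplied by Lemma~\ref{lemma: Process cov} and extract its $n\to\infty$ limit. The variables $u_j=a_j(t_1)$ and $v_j=a_j(t_2)-a_j(t_1)$ are centred Gaussian, independent over the disjoint time intervals, and (under the symmetric extension $a_{-j}=a_j$ that makes $H_n$ symmetric) depend on $j$ only through $|j|$. I would first apply Wick's/Isserlis' formula to every joint moment in that expression, writing it as a sum over pair-partitions $\pi$ of $[p+q]$. Independence of $u$ and $v$ annihilates any pair joining a $u$-entry to a $v$-entry, so each surviving pair is a $u$--$u$ pair (weight $t_1$) or a $v$--$v$ pair (weight $t_2-t_1$), carrying a Kronecker constraint $j_a=\pm j_b$. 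Crucially, the subtracted product $\E[u_{j_1}\cdots u_{j_p}]\,\E[\cdots]$ cancels exactly those $\pi$ with no cross-matched pair, leaving only $\pi\in\PP_2(p,q)$.

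Next I would run the parameter count isolating the leading order. Each Kronecker constraint and each defining relation $\sum_k(-1)^k j_k=0$ of $A_p,A_q$ removes one freely chosen index from $\{\pm1,\dots,\pm b_n\}$, so a fixed $\pi$ contributes at order $b_n^{-(p+q)/2+1}\cdot b_n^{(\#\text{free indices})}$; a nonzero limit therefore forces exactly $(p+q)/2-1$ free indices. The signs $(-1)^k$ drive the parity analysis: a self-matched pair survives only when its members occupy one odd and one even position (so the two contributions $(-1)^a j+(-1)^b(\pm j)$ can cancel), while a cross-matched pair is forced to be odd--even or same-parity, the two possibilities generating the ``$-$'' kernels ($\Delta_2\to f^{-}$) and ``$+$'' kernels ($\tilde\Delta_2\to f^{+}$) of Definition~\ref{def:f_I}. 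A parity bookkeeping on the $u/v$ split recorded by $\binom{q}{r}$ then shows $r$ must be even (odd $r$ leaves an odd number $p+r$ of $u$-entries, which admits no perfect matching, killing the moment), while the subtraction has already discarded $r=0$; hence the sum runs over $r=2,4,\dots,q$.

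I would then pass the surviving sums to the integrals $f_I^{\pm},f_{II}^{\pm}$ by the standard Riemann-sum limit, scaling the row index by $n$ (yielding $x_0,y_0\in[0,1]$), the increments by $b_n$ (yielding the $x_i\in[-1,1]$), and sending the band indicators $\chi_{[1,n]}$ to the factors $I_{[0,1]}(x_0+b\sum\cdots)$. Since $b_n=o(n)$ gives $b=0$, every such indicator collapses to $I_{[0,1]}(x_0)$ or $I_{[0,1]}(y_0)\equiv1$ and the ``$+$''/``$-$'' distinction evaporates, so the $f^{\pm}$ reduce to explicit constants. Collecting the weights then produces $t_1^{(p+r)/2}(t_2-t_1)^{(q-r)/2}$, an overall $2^{(p+q)/2}$ from the sign freedom $j_a=\pm j_b$ available to each of the $(p+q)/2$ pairs, the binomial $\binom{q}{r}$ from the split of the second block, and the factor $\Gamma(\tfrac{q-r}{2}+1)=(\tfrac{q-r}{2})!$, which I would identify as the number of odd--even self-matchings of the $q-r$ increment entries inside the second block. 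Finally, sorting the admissible $u$-structures into $\Delta_2$, $\tilde\Delta_2$, and $\Delta_{2,4}$ (the last counted with multiplicity $2$ for its four-element block) assembles the constant into $R(p,r)$ and yields \eqref{eqn:Cov_wp_hp}.

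The hard part will be the combinatorial classification in the middle step: proving that \emph{only} the three families $\Delta_2,\tilde\Delta_2,\Delta_{2,4}$ survive in the limit, that every other pair-partition—in particular those forcing blocks larger than four or incompatible parities—is of strictly lower order in $b_n$, and that the degenerate four-element block enters with multiplicity exactly two. Establishing the odd--even matching count that yields the Gamma factor, and verifying that it dovetails with both the even-$r$ restriction and the $b=0$ evaluation of the kernels, is where the substantive work lies; by contrast the Wick expansion and the Riemann-sum passage are comparatively routine.
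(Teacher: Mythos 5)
Your overall architecture is the same as the paper's: start from Lemma \ref{lemma: Process cov}, use independence of the $u$'s and $v$'s to factor each $r$-term into a $u$-block covariance times $\E[v_{J_2'}]$, kill $r=0$ by the subtraction and odd $r$ by parity, isolate the leading-order pair partitions, sort them into $\Delta_2$, $\tilde{\Delta}_2$ and $\Delta_{2,4}$, and evaluate the kernels $f_I^{\pm}, f_{II}^{\pm}$ at $b=0$. The one place you genuinely diverge is the $v$-block: the paper does not count matchings directly but recognizes $b_n^{-\frac{q-r}{2}}\sum_{J_2'}\E[v_{J_2'}]$ as $[2(t_2-t_1)]^{\frac{q-r}{2}}$ times a normalized trace moment of a band Hankel matrix and imports $\Gamma(\tfrac{q-r}{2}+1)$ from the known limiting spectral distribution $|x|e^{-x^2}$ of Liu and Wang. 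Your direct count is viable but makes you prove exactly the combinatorial fact the citation supplies for free.

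There is, however, a concrete error in your setup that would derail the computation. You posit the symmetric extension $a_{-j}=a_j$ and, consistently with that, a per-pair ``sign freedom $j_a=\pm j_b$'' producing the factor $2^{\frac{p+q}{2}}$. But $H_n=P_nT_n$ has $(i,j)$ entry $a_{n+1-i-j}$, which is symmetric in $(i,j)$ automatically; the entries indexed by $\{\pm 1,\dots,\pm b_n\}$ are all \emph{independent}, and a nonzero second moment forces $j_a=j_b$ exactly (no sign freedom). The factor $2^{\frac{p+q}{2}}$ comes from the index range having $2b_n$ points against the $b_n^{\frac{p+q}{2}}$ normalization, not from sign choices. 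This matters for the conclusion: under your convention a same-parity self-matched pair with $j_a=-j_b$ has nonzero expectation \emph{and} contributes zero to $\sum_k(-1)^kj_k$ at full order, so every pair partition of a $2k$-block survives and the count is $(2k-1)!!$ rather than the $k!=\Gamma(k+1)$ of odd--even matchings; already for $q-r=4$ this gives $3$ instead of $2$, contradicting \eqref{eqn:Cov_wp_hp}. So your assertion that only odd--even self-matchings survive is incompatible with your own model of the entries; with the correct (independent) model it is forced by the parity of the alternating sum, and the rest of your argument then closes. You should also note that under independence the cross-matches in $\tilde{\Delta}_2$ are still realized (equal values $j_u=j_{v}'$ at same-parity positions give the ``$+$'' branch of the paper's reduction), so that family does not disappear.
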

\begin{proof}

 First we define $U:=A_n(t_1)$ and $V:=A_n(t_2)-A_n(t_1)$. Further for $0 \leq r \leq q$, we define
\begin{align} \label{eqn:J,J',U_j}
 J=&(j_1,j_2, \ldots j_p), \quad J^{\prime} = (j_1^{\prime},j_2^{\prime},\ldots ,j_q^{\prime}),	\quad
J_{1}^{\prime}=\left(j_{1}^{\prime}, \ldots, j_{r}^{\prime}\right),  \quad  J_{2}^{\prime}=\left(j_{r+1}^{\prime}, \ldots, j_{q}^{\prime}\right), \\  
u_{\mathrm{J}} = & u_{j_{1}} \cdots u_{j_{p}}, \quad  u_{\mathrm{J}_{1}^{\prime}}=u_{j_{1}^{\prime}} \cdots u_{j_{r}^{\prime}} \ \mbox{ and } \ v_{\mathrm{J}_{2}^{\prime}}=v_{j_{r+1}^{\prime}} \cdots v_{j_{q}^{\prime}}. \nonumber
\end{align} 	

Now with the above notations, (\ref{eqn:E[wp,wq_hp}) can be written as
\begin{align}\label{eqn: cov wp(t) wq(t)}
\E\left[w_{p}\left(t_{1}\right) w_{q}\left(t_{2}\right)\right] 
&=b_{n}^{-\frac{p+q}{2}+1} \sum_{r=0}^{q}
{q \choose r}
 \sum_{J, J^{\prime}}\Big\{\left(\E\left[u_{J} u_{J_{1}^{\prime}}\right]-\E\left[u_{J}\right] \E\left[u_{J_{1}^{\prime}}\right]\right) \E\left[v_{J_{2}^{\prime}}\right]\Big\}+o(1),
\end{align}
where $J \in A_{p}$ and $J^{\prime} \in A_{q}$ . Now for a fixed $r$, $0 \leq r \leq q$, consider 
\begin{equation}\label{eqn:J,J_1,J_2}
   b_{n}^{-\frac{p+q}{2}+1} \sum_{J, J^{\prime}}\Big\{\left(\E\left[u_{J} u_{J_{1}^{\prime}}\right]-\E\left[u_{J}\right] \E\left[u_{J_{1}^{\prime}}\right]\right) \E\left[v_{J_{2}^{\prime}}\right]\Big\}. 
\end{equation}

Consider the term
   $\left(\E[u_{J} u_{J_{1}^{\prime}}]-\E[u_{J}] \E[u_{J_{1}^{\prime}}]\right) \E[v_{J_{2}^{\prime}}]$. Due to the independence of random variables $\{u_i\}$ and $\{ v_i \}$, this term is non-zero only when the following three conditions are satisfied:
   
   \begin{enumerate}[(a)]
      \item there exists at least one common element between $S_J$ and $S_{J_1^{\prime}}$, that is, $S_J \cap S_{J_1^{\prime}} \neq \emptyset$,
        \item every element of $S_J \cup S_{J_1^{\prime}}$ has cardinality at least 2, and
         \item every element of $S_{J_2^{\prime}}$ has cardinality at least 2,
   \end{enumerate}
   where for a vector $J=\left(j_{1}, j_{2}, \ldots, j_{p}\right) \in A_{p},$ 
the multi-set $S_{J}$ is defined as
\begin{equation} \label{def:S_J_Hn}
  S_{J}=\left\{ j_{1},j_{2}, \ldots,j_{p} \right\}. 
\end{equation}
 Now consider the map
  $L:\{ \pm 1, \pm 2, \ldots \pm b_n \}^p \times \{ \pm 1, \pm 2, \ldots \pm b_n \}^q \rightarrow \{ \pm 1, \pm 2, \ldots \pm b_n \}^{p+q-2} $
  defined in the following way:
  Let $j_u \in J$ be the first cross-matched element. If $(-1)^u j_u=(-1)^v j_v^{\prime}$ for some $v$, define $L \in \{ \pm 1, \pm 2, \ldots \pm b_n \}^{p+q-2} $ by
\begin{align*}
l_{1} & =j_{1}, \ldots, l_{u-1}=j_{u-1}, l_{u}=j_{1}^{\prime}, \ldots, l_{u+v-2}=-j_{v-1}^{\prime}, \\
l_{u+v-1} & = -j_{v+1}^{\prime}, \ldots, l_{u+q-2}=-j_{q}^{\prime}, l_{u+q-1}=j_{u+1}, \ldots, l_{p+q-2}=j_{p}.
\end{align*}

And if for the first cross-matched element $j_u$, $(-1)^u j_u=-(-1)^v j_v^{\prime}$, then construct $L$ by starting from $(J,-J^{\prime})$.
This process of constructing $L$ from $(J,J^{\prime})$ is called a \textit{reduction} step and is denoted by $L=J\bigvee_{j_u}J^{\prime}$. 

Notice that if $J \in A_{p}$ and $J^{\prime} \in A_{q}$, then $L \in A_{p+q-2}$. Furthermore, for every $L \in A_{p+q-2}$, we can find at most $pq$ pairs $(J,J^{\prime}) \in A_{p} \times A_{q}$ such that $L=J\bigvee_{j_u}J^{\prime}$. This is because when $p-1$ elements in $J$ (or similarly $q-1$ elements in $J^{\prime}$) are fixed, the value of the left out element becomes automatically fixed.
\begin{localclaim}\label{claim: multiplicity 2}
Non-zero contribution for (\ref{eqn: cov wp(t) wq(t)}) are due to those $L$ such that each element of $S_L$ have multiplicity 2.
\end{localclaim}
Consider $L \in A_{p+q-2}$ obtained from $(J,J^{\prime})$ satisfying the above conditions (a) - (c). Notice that for any such $L$, there can exist at most one element in $S_L$ of multiplicity 1. Since $L \in A_{p+q-2}$, the element of multiplicity 1 is determined by other elements of $L$. Furthermore, this case is only possible when the cross-matched element has multiplicity 3 in $S_J \cup S_{J^{\prime}}$. Thus the number of all $(J,J^{\prime})$ which has such $L$ as its image is $O(b_n^{\frac{p+q-3}{2}})$ which implies that the contribution of such terms to $\E[w_p w_q]$ is $O(b_n^{-\frac{1}{2}})$. It is also easy to see that the sum of all terms $(J,J^{\prime})$ such that an $S_L$ has an element of multiplicity 3, leads to zero contribution in (\ref{eqn:J,J_1,J_2}). Hence the claim.

The claim along with constraints (b) and (c) implies that each element in $S_J \cup S_{J_1^{\prime}}$ and $S_{J_2^{\prime}}$ are of cardinality 2 and that $S_{J_1^{\prime}}$ and $S_{J_2^{\prime}}$ have no elements in common. Furthermore, the contribution is maximum when $J \in A_p, J_1^{\prime} \in A_r$ and $J_2\in A_{q-r}$.
Thus (\ref{eqn:J,J_1,J_2}) becomes
\begin{equation*}
\noindent
    \left(b_{n}^{-\frac{p+r}{2}+1} \sum_{J, \mathrm{J}_{1}^{\prime}}\left(\E\left[u_{{J}} u_{{J}_{1}^{\prime}}\right]-\E\left[u_{{J}}\right] \E\left[u_{{J}_{1}^{\prime}}\right]\right)\right)\left(b_{n}^{-\frac{q-r}{2}} \sum_{{J}_{2}^{\prime}} \E\left[v_{{J}_{2}^{\prime}}\right]\right)+o(1),
\end{equation*}
where $  J \in A_p$, $J_1^{\prime} \in A_r$ and $J_2^{\prime} \in A_{q-r}$. Notice that when $r$ is odd, the pair partition mentioned above cannot happen. Hence non-zero contribution occurs only for even $r$.

Claim \ref{claim: multiplicity 2} also implies that it is sufficient to fix a particular $\pi \in \PP_{2}(p+q-2)$ and count the number of $L \in A_{p+q-2}$ corresponding to $\pi$. Since each element in $S_L$ is repeated exactly twice, the number of $L$ that correspond to a particular $\pi$ is of order at most $O(b_n^{\frac{p+q-2}{2}})$. Now since the number of pre-images of a particular $L$ is at most $pq$, only those $\pi$ from which  $O(b_n^{\frac{p+q-2}{2}})$ many $L'$s can be generated contribute to $\lim_{n \rightarrow \infty} \E[w_p w_q]$, which are exactly those $\pi \in \Delta_2(p+q-2)$. 

Thus we obtain for $r$ even, 
\begin{align}\label{eqn U_j U_J_1}
 &  \lim_{n \rightarrow \infty}  b_{n}^{-\frac{p+r}{2}+1} \sum_{{J}, {J}_{1}^{\prime}}\left(\E\left[u_{{J}} u_{{J}_{1}^{\prime}}\right]-\E\left[u_{{J}}\right] \E\left[u_{{J}_{1}^{\prime}}\right] \right) \nonumber \\ 
 & =\left(\E\left[u_{i}^{2}\right]\right)^{\frac{p+r}{2}}\left[\sum_{\pi \in \Delta_{2}(p, r)}f_{I}^{-}(\pi)+\sum_{\pi \in \tilde{\Delta}_{2}(p,r)}f_{I}^{+}(\pi)\right] \nonumber \\
  &  \qquad +\left(\left(\E\left[u_{i}^{2}\right]\right)^{\frac{p+r-4}{2}} \E\left[u_{i}^{4}\right]-\left(\E\left[u_{i}^{2}\right]\right)^{\frac{p+r}{2}}\right)  \times\left[\sum_{\pi \in {\Delta_{2,4}}(p, r)}f_{I I}^{-}(\pi)+\sum_{\pi \in {\Delta}_{2,4}(p, r)}f_{I I}^{+}(\pi)\right],
\end{align}
where $f_{I}^{-}(\pi), f_{I}^{+}(\pi), f_{I I}^{-}(\pi)$ and $ f_{I I}^{+}(\pi)$ are as in Definition \ref{def:f_I}.
Since we have $a_i(t)$ to be Brownian motion entries, $\E[u_i^2]=t_1$ and $\E[u_i^4]=3t_1^2$ for all $i \neq 0$. Since $b=0$ as $b_n=o(n)$, we get
$$f_{I}^{-}(\pi)=f_{I}^{+}(\pi)=f_{I I}^{-}(\pi)=f_{I I}^{+}(\pi)=2^{\frac{p+r}{2}-1}.$$
Therefore (\ref{eqn U_j U_J_1}) will be
\begin{align}\label{eqn : simp uJ uJ1}
     \begin{cases}
    2^{\frac{p+r}{2}} t_{1}^{\frac{p+r}{2}}\left(\left| \Delta_2(p,q )\right|+ \left|\tilde{\Delta}_{2}(p, r)\right|+2\left|\Delta_{2,4}(p, r)\right|\right) &\text{when} \hspace{2mm} r \hspace{2mm} \text{is even,}\\
    0 & \text{otherwise}.
    \end{cases}
\end{align}
Now we proceed to find $b_{n}^{-\frac{q-r}{2}} \sum_{J_{2}^{\prime}} \E\left[v_{J_{2}^{\prime}}\right]$. For that consider a random band Hankel matrix $H_n$ with input variables as $\frac{v_{i-j}}{\sqrt{t_2-t_1}}$ and scaling as $\frac{1}{\sqrt{b_n}}$. Then

\begin{align*}
    &\frac{1}{n} \E\left[\Tr H_{n}^{q-r}\right]
    =\frac{1}{n\left(2 b_{n}\right)^{\frac{q-r}{2}}} \sum_{i=1}^{n} \sum_{j_{1}, \ldots, j_{q-r}=-b_{n}}^{b_{n}}  \E\left[w_{j_{1}} \cdots w_{j_{q-r}}\right] \prod_{s=1}^{q-r} I_{[1, n]}\left(i-\sum_{z=1}^{s} (-1)^z j_{z}\right) \delta\left(\sum_{z=1}^{q-r} (-1)^z j_{z}\right).
\end{align*}

Since $b_n=o(n)$, it is easy to see that $\prod_{s=1}^{q-r} I_{[1, n]}\left(i-\sum_{z=1}^{s} (-1)^z j_{z}\right)$ converges to 1 as $n$ tends to infinity, for all choices of $j_1, j_2, \ldots ,j_{q-r}$. Therefore using dominated convergence theorem, we get

\begin{align*}
  \lim_{n \rightarrow \infty}  \frac{1}{n} \E\left[\Tr H_{n}^{q-r}\right] &= \lim_{n \rightarrow \infty}  \frac{1}{\left(2 b_{n}\right)^{\frac{q-r}{2}}} \sum_{j_{1}, \ldots, j_{q-r}=-b_{n}}^{b_{n}} \E\left[w_{j_{1}} \cdots w_{j_{q-r}}\right] \delta\left(\sum_{z=1}^{q-r} (-1)^z j_{z}\right)+o(1), \\
  &=  \lim_{n \rightarrow \infty} \frac{1}{\left[2\left(t_{2}-t_{1}\right)\right]^{\frac{q-r}{2}}} \frac{1}{\left(b_{n}\right)^{\frac{q-r}{2}}} \sum_{J_{2}^{\prime}} \E\left[v_{J_{2}^{\prime}}\right]+o(1),
\end{align*}

where $J_{2}^{\prime}, v_{J_{2}^{\prime}}$ are as given in (\ref{eqn:J,J',U_j}) and $J_{2}^{\prime} \in A_{q-r}$.  By \cite[Theorem 3.2]{liu_wang2011}, the empirical measure of $H_n$ converge to the distribution given by $f(x)=|x| \exp \left(-x^{2}\right)$. Hence we get that $\lim _{n \rightarrow \infty} \frac{1}{n} \E\left[\Tr H_{n}^{q-r}\right]$ equals to $\Gamma(\frac{q-r}{2}+1)$ when $r$ is even and 0 when $r$ is odd, where $\Gamma$ denote the Gamma function.
Thus
\begin{equation}\label{eqn: simp v_J2}
    \lim _{n \rightarrow \infty} b_{n}^{-\frac{q-r}{2}} \sum_{J_{2}^{\prime}} \E\left[v_{J_{2}^{\prime}}\right]=\left\{\begin{array}{ll}
0 & \text { if } r \text { is odd, } \\
\Gamma(\frac{q-r}{2}+1)\left(t_{2}-t_{1}\right)^{\frac{q-r}{2}} 2^{\frac{q-r}{2}} & \text { if } r \text { is even. }
\end{array}\right.
\end{equation}

Now using (\ref{eqn : simp uJ uJ1}) and (\ref{eqn: simp v_J2}) in (\ref{eqn: cov wp(t) wq(t)}), we get that for $p,q$ even
\begin{align*}
   & \lim _{n \rightarrow \infty} \E\left[w_{p}\left(t_{1}\right) w_{q}\left(t_{2}\right)\right] \\
   &= \sum_{r=2,4, \ldots, q}{q \choose r} t_{1}^{\frac{p+r}{2}} 2^{\frac{p+q}{2}}\left(\left| \Delta_2(p,q )\right|+ \left|\tilde{\Delta}_{2}(p, r)\right|+2\left|\Delta_{2,4}(p, r)\right|+\right)\Gamma\left(\frac{q-r}{2}+1\right)\left(t_{2}-t_{1}\right)^{\frac{q-r}{2}} \\
   &=  2^{\frac{p+q}{2}} \sum_{r=2,4, \ldots, q}{q \choose r} t_{1}^{\frac{p+r}{2}} \left(t_{2}-t_{1}\right)^{\frac{q-r}{2}} R(p,r)\Gamma\left(\frac{q-r}{2}+1\right).
\end{align*}
This completes the proof of the lemma.
\end{proof}

Now we prove Proposition \ref{pro:finite convergence} with the help of above lemmata.
\begin{proof}[Proof of Proposition \ref{pro:finite convergence}] 
 We use Cram\'er-Wold device and method of moment to prove  Proposition \ref{pro:finite convergence}. It is enough to show that, for $0<t_1 \leq t_2 \leq \cdots \leq t_\ell $ and for even integers $p_1, p_2, \ldots , p_\ell \geq 2$,
\begin{equation} \label{eqn:E_w_pi_hp}
\lim_{n\to\infty}\E[w_{p_1}(t_1)w_{p_2}(t_2) \cdots w_{p_\ell}(t_\ell)]=\E[W_{p_1}(t_1)W_{p_2}(t_2) \cdots W_{p_\ell}(t_\ell)],
\end{equation} 
where $\{W_p(t)\}_{p\geq 2}$ is a centered Gaussian family with covariance as in (\ref{eqn:W_p(t):cov_hp}). Note that the proof of (\ref{eqn:E_w_pi_hp}) goes similar to the proof of Theorem 1.4 of \cite{li_sun_2015}, once we get the covariance formula, (\ref{eqn:Cov_wp_hp}). So with the helps of above lemmata and by similar arguments of Theorem 1.4 of \cite{li_sun_2015}, we get the result. We skip the details here.
\end{proof}    

\subsection{ Proof of Proposition \ref{pro:tight}}\label{subsec:tight}

The idea of the proof of Proposition \ref{pro:tight} is similar to the proof of Proposition 10 in \cite{m&s_toeplitz_2020}. Here we outline only the main steps, for the details, see the proof of Proposition 10 of \cite{m&s_toeplitz_2020}.  
\begin{proof}[Proof of Proposition \ref{pro:tight}] 
 
We  prove Proposition \ref{pro:tight} for $\alpha = 4$ and $\beta = 1$. 
Suppose $p \geq 2$ is an even fixed positive integer and $t,s \in[0,T] $, for some fixed $T \in \mathbb{N}$. Then from (\ref{eqn:w_p(t)}),
\begin{align} \label{eqn:w_p(t-s)}
 w_{p}(t) - w_{p}(s) &= \frac{\sqrt{b_n}}{n} \big[ \Tr(A_n(t))^{p} - \Tr(A_n(s))^{p} - \E[\Tr(A_n(t))^{p} - \Tr(A_n(s))^{p}]\big].
\end{align}
 For $0< s <t $, using binomial expansion, we get 
\begin{equation} \label{eqn:trace t-s}
\Tr (A_n(t))^{p} - \Tr (A_n(s))^{p}  = \Tr[ (A'_n(t-s) )^{p}] + \sum_{d=1}^{p-1} \binom{p}{d} \Tr [(A'_n(t-s))^d (A_n(s))^{p-d}],
\end{equation}
where $A'_n(t-s)$ is a band Hankel matrix with entries $\{ a_n(t) - a_n(s)\}_{n \geq 0}$. 
Now by using the trace formula of band Hankel matrices from Result \ref{res:trace_Hn}, we get 

\begin{align} \label{eqn:tracemultply}
	\Tr[ (A'_n(t-s) )^{p}] & =\frac{1}{ {b_n}^{\frac{p}{2}} } \sum_{i=1}^n \sum_{A_{p}} a'_{J_{p}}(t-s) I(i, J_p),  \\
	\Tr [(A'_n(t-s))^d (A_n(s))^{p-d}] & = \frac{1}{ {b_n}^{\frac{p}{2}} } \sum_{i=1}^n \sum_{A_{p}}  a'_{J_{d}}(t-s) a_{J_{p-d}}(s)  I(i, J_p),  \nonumber
	\end{align}
where 
\begin{align*}
a'_{J_{p}}(t-s) & = (a_{j_1}(t)-a_{j_1}(s)) (a_{j_2}(t) - a_{j_2}(s)) \cdots (a_{j_{p}}(t) - a_{j_{p}}(s)), \\
a'_{J_{d}}(t-s) & = (a_{j_1}(t)-a_{j_1}(s)) (a_{j_2}(t) - a_{j_2}(s)) \cdots (a_{j_{d}}(t) - a_{j_{d}}(s)), \\
a_{J_{p-d}}(s) & = a_{j_{d+1}}(s) a_{j_{d+2}}(s)\cdots a_{j_{p}}(s).
\end{align*}
Note that, $a_n(t) -a_n(s)$ has same distribution as $a_n(t-s)$ and  $a_n(t)$ has same distribution as $N(0, t)$. Therefore by using (\ref{eqn:tracemultply}) in (\ref{eqn:trace t-s}), we get 
\begin{align} \label{eqn:Z_p}
&  \Tr (A_n(t))^{p}- \Tr (A_n(s))^{p} \nonumber \\
  & \distas{D}  \frac{1}{ {b_n}^{\frac{p}{2}} } \sum_{i=1}^n \Big[ \sum_{A_{p}} \Big( (t-s)^{\frac{p}{2}} x_{j_1} x_{j_2} \cdots x_{j_{p}} + \sum_{d=1}^{p-1} \binom{p}{d}  (t-s)^{\frac{d}{2}} {s}^{\frac{p-d}{2}} x_{j_1} x_{j_2} \cdots x_{j_{d}}  y_{j_{d+1}} y_{j_{d+2}} \cdots y_{j_{p}}  \Big) \Big]  I(i, J_p) \nonumber \\
 & = \sqrt{\frac{t-s} { {b_n}^{p} } }\sum_{i=1}^n  \sum_{A_{p}} Z_{J_{p}} I(i, J_p), \mbox{ say},
\end{align} 
where $\{x_{i}\}$ and $\{y_{i}\}$ are independent normal random variables with mean zero, and variances $(t-s)$ and $s$, respectively. If $x_1$ and $x_2$ have same distribution, then we denote it as $x_1 \distas{D} x_2$. 
Finally, by using (\ref{eqn:Z_p}) in (\ref{eqn:w_p(t-s)}), we get
\begin{align} \label{eqn:w_p^4}
\E[w_{p}(t) -w_{p}(s)]^4 & = \frac{(t-s)^2}{n^4 {b_n}^{2p-2}} \sum_{i_1, i_2, i_3, i_4=1}^n  \sum_{A_{p}, A_{p}, A_{p}, A_{p}} \E \Big[ \prod_{k=1}^4 (Z_{J^k_{p}} - \E Z_{J^k_{p}})  I(i_k, J^k_p) \Big], 
\end{align}
where $J^k_{p}$ are vectors from $A_{p}$, for each $k=1, 2, 3, 4$. 

We introduce the terminology: $J^k_{p}$ and $J^\ell_{p}$ are connected if $S_{J^k_{p}} \cap S_{J^\ell_{p}} \neq \emptyset$, where $S_{J^i_{p}}$ is as given in (\ref{def:S_J_Hn}). Now
depending on connectedness between $J^k_{p}$'s, the following three cases arise in (\ref{eqn:w_p^4}):
\vskip5pt
\noindent \textbf{Case I.} \textbf{At least one of $J^k_{p}$ for $k=1,2,3,4$, is not connected with the remaining ones:} 

 Since one $J^k_{p}$ is not connected with the others therefore due to the independence of  entries, we get $\E[w_{p}(t) -w_{p}(s)]^4=0$.
 \vskip5pt
 \noindent \textbf{Case II.} \textbf{$J^1_{p}$ is connected with only one of $J^2_{p}, J^3_{p}, J^4_{p}$ and the remaining two of $J^2_{p}, J^3_{p}, J^4_{p}$  are connected only with each other:} Without loss of generality, we assume $J^1_{p}$ is connected only with $J^2_{p}$ and $J^3_{p}$ is connected only with $J^4_{p}$.  
Under this situation the terms in the right hand side of (\ref{eqn:w_p^4}) can be written as
\begin{align} \label{eqn:case II} 
&\frac{(t-s)^2}{n^4 {b_n}^{2p-2}} \sum_{i_1, i_2, i_3, i_4=1}^n  \sum_{ A_{p}, A_{p}} \E \big[ (Z_{J^1_{p}} - \E Z_{J^1_{p}}) (Z_{J^2_{p}} - \E Z_{J^2_{p}}) \big] I(i_1, J^1_p) I(i_2, J^2_p) \nonumber \\ 
& \qquad \qquad \qquad \qquad \qquad \sum_{A_{p}, A_{p}} \E \big[ (Z_{J^3_{p}} - \E Z_{J^3_{p}}) (Z_{J^4_{p}} - \E Z_{J^4_{p}}) \big]I(i_3, J^3_p) I(i_4, J^4_p) = \theta, \mbox{ say}.
\end{align}
Now from the definition of $Z_{J_{p}}$, given in (\ref{eqn:Z_p}) and $\E(x_i)= \E(y_i) = 0$, we get

\begin{align} \label{eqn:B_p_2}
& \sum_{ A_{p}, A_{p}} \big|\E \big[ (Z_{J^1_{p}} - \E Z_{J^1_{p}}) (Z_{J^2_{p}} - \E Z_{J^2_{p}}) \big] I(i_1, J^1_p) I(i_2, J^2_p) \big| \nonumber \\
& \qquad = \sum_{( J^1_{p}, J^2_{p}) \in B_{P_2}} \big|\E \big[ (Z_{J^1_{p}} - \E Z_{J^1_{p}}) (Z_{J^2_{p}} - \E Z_{J^2_{p}}) \big] I(i_1, J^1_p) I(i_2, J^2_p) \big| 
\leq \sum_{( J^1_{p}, J^2_{p}) \in B_{P_2}} \alpha,
\end{align}
where $\alpha$ is a positive constant and last inequality arises as $x_i \distas{D} N(0, t-s)$, $y_i \distas{D} N(0, s)$, and $B_{P_2}$ is defined as 
\begin{align*}B_{P_2}=\{( J^1_{p}, J^2_{p}) \in  A_{p} \times A_{p} &: S_{J^1_{p}} \cap S_{J^2_{p}} \neq \emptyset \mbox{ and each entries of } S_{J^1_{p}} \cup S_{J^2_{p}}\\
&\quad \mbox{ has multiplicity greater than or equal to two}\},
\end{align*}
with $S_{J^i_p}$ is as given in (\ref{def:S_J_Hn}). It is easy to observe that $|B_{P_2}| = O({b_n}^{p-1})$.

Now from (\ref{eqn:case II}) and (\ref{eqn:B_p_2}), we get
 
\begin{align} \label{case II,(t-s)}
|\theta|\leq \frac{(t-s)^2}{ n^4 {b_n}^{2p-2}} \sum_{i_1, i_2, i_3, i_4=1}^n  \alpha^2 |B_{P_2}|^2 = (t-s)^2 \alpha^2 O(1) \leq M_1 (t-s)^2,
\end{align} 
where $M_1$ is a positive constant depending only on $p$ and $T$.\\
 \noindent \textbf{Case III.} \textbf{$J^1_{p}, J^2_{p}, J^3_{p}, J^4_{p}$ are connected:}  
Since $\E(x_{i})=0$ and $\E(y_{i})=0$, therefore
\begin{align*} 
\sum_{ A_{p}, A_{p}, A_{p}, A_{p}} & \E \big[ \prod_{k=1}^4 (Z_{J^k_{p}} - \E Z_{J^k_{p}}) \big] 
 = \sum_{( J^1_{p}, J^2_{p}, J^3_{p}, J^4_{p}) \in B_{P_4}} \E \big[ \prod_{k=1}^4 (Z_{J^k_{p}} - \E Z_{J^k_{p}}) \big],
\end{align*}
 where $B_{P_4}$ is defined as,
 \begin{align*} 
 B_{P_4}=\{( J^1_{p}, J^2_{p}, J^3_{p}, J^4_{p}) \in & A_{p} \times A_{p} \times A_{p} \times A_{p}  : S_{J^1_{p}} \cap S_{J^2_{p}} \cap S_{J^3_{p}} \cap S_{J^4_{p}}  \neq \emptyset  \mbox{ and each entries of }  \\
& \qquad  S_{J^1_{p}} \cup S_{J^2_{p}} \cup S_{J^3_{p}} \cup S_{J^4_{p}}  \mbox{ has multiplicity greater than or equal to two}
\}.
\end{align*}

 Observe that $|B_{P_4}| = o({b_n}^{2p-2})$. Hence
   
 \begin{align} \label{eqn:case III,Z}
\frac{(t-s)^2}{n^4 {b_n}^{2p-2}} \sum_{i_1, i_2, i_3, i_4=1}^n  \sum_{ A_{p}, A_{p}, A_{p}, A_{p}} \Big|\E\big[ \prod_{k=1}^4 (Z_{J^k_{p}} - \E Z_{J^k_{p}})\big] I(i_k, J^k_p) \Big| &\leq \frac{(t-s)^2}{n^4 {b_n}^{2p-2}} \sum_{i_1, i_2, i_3, i_4=1}^n \beta o({b_n}^{2p-2})\nonumber\\
& = \beta (t-s)^2  o(1)  \nonumber\\
& \leq  M_2 (t-s)^2,
 \end{align}
where  $M_2$ is a positive constant depending only on $p, T$. 

Now combining \eqref{case II,(t-s)} and (\ref{eqn:case III,Z}), it follows that there exists a positive constant $M_T$, depending only on $p, T$ such that
\begin{align} \label{eqn:M_T}
\E[w_{p}(t) -w_{p}(s)]^4 & \leq M_T (t-s)^2 \ \ \ \forall \ n \in \mathbb{N} \ \mbox{and } t,s \in[0,\ T].
\end{align}
This completes the proof of Proposition \ref{pro:tight} with $\alpha = 4$ and $\beta = 1$.
\end{proof}
\begin{remark} For any fixed $N \in \mathbb{N}$, (\ref{eqn:M_T}) implies
	\begin{equation*}
	\{ w_{p}(t) ; t \geq 0, 1 \leq p \leq N\} \stackrel{\mathcal D}{\rightarrow} \{W_{p}(t) ; t \geq 0 , 1 \leq p \leq N\} \quad \mbox{ as } n \tends \infty. 
	\end{equation*}  
	But the process convergence of $\{w_{p}(t) ; t \geq 0, p \geq 2\}$ does not follow from the proof of Theorem \ref{thm:hankprocess}, because the constant $M_T$ (depends on $p$) of (\ref{eqn:M_T}) may tends to infinity as $p \tends \infty$.
\end{remark}
The following remark provides an application of Theorem \ref{thm:hankprocess} in the study of the limiting law of $\sup_{0 \leq t \leq T}|w_p(t)|$: 
\begin{remark} \label{rem:sup_wp(t)_convergence_Hp}
	Suppose $ p\geq 2 $ is an even positive integer and $T$ is a positive real number. Then as $n\to\infty$
	\begin{equation} \label{eqn:sup_wp(t)_convergence}
	 \sup_{0 \leq t \leq T} |w_{p}(t)| \stackrel{\mathcal D}{\rightarrow}  \sup_{0 \leq t \leq T} |W_{p}(t)|,
	\end{equation}
	where $\{W_{p}(t);  t \geq 0\}$  is as defined  in Theorem \ref{thm:hankprocess}.
	The convergence of (\ref{eqn:sup_wp(t)_convergence}) follows from the continuous mapping theorem and Theorem \ref{thm:hankprocess}.
\end{remark}

\begin{remark} \label{rem:poly_test_function_Hp}
Theorem \ref{thm:hankprocess} and Remark \ref{rem:sup_wp(t)_convergence_Hp} can also be generalised for even degree polynomial test function. The proofs will be similar to the proof of Theorem \ref{thm:hankprocess} and Remark \ref{rem:sup_wp(t)_convergence_Hp}, respectively.
\end{remark}

In the following remark, we discuss the process convergence when the entry $\{a_0(t);t\ge 0\}$ is the standard Brownian motion.

\begin{remark} \label{rem:hankprocess_a_0_proof}
Let $\{a_0(t); t\geq 0\}$ be a  standard Brownian motion  and independent of $\{a_n(t);t\ge 0\}_{n\ge 1}$. For $p\geq 1$, define 
\begin{align*}\label{eqn:w_p(t)_tilde}
\tilde{A}_n(t) & := A_n(t) + \frac{a_0(t)}{\sqrt{b_n}} I_n \ \mbox{ and } \
   \tilde{w}_{p}(t)  := \frac{\sqrt{b_n}}{n}\big(\Tr(\tilde{A}_n(t))^p-\E\Tr(\tilde{A}_n(t))^p\big),
\end{align*}
where $A_n(t)$ is the Hankel matrix as defined in (\ref{def:A_n(t)}) and $I_n$ is an $n \times n$ identity matrix.

 Suppose $ p\geq 2$ is a positive even integer. Then as $n\to\infty$
	\begin{equation} \label{eqn:hankprocess_a_0}
	\{ \tilde{w}_{p}(t) ; t \geq 0\} \stackrel{\mathcal D}{\rightarrow} \{\tilde{W}_{p}(t) ; t \geq 0\}, 
	\end{equation}
	where $\{\tilde{W}_{p}(t);  t \geq 0\}$  is a Gaussian process and 
	\begin{equation*} \label{def:tilde_W_p(t)}
 \tilde{W}_p(t) = W_p(t) + p t^{\frac{p-1}{2}} R_{p-1} a_0(t),
 \end{equation*}
with $\{W_{p}(t);  t \geq 0\}$  as in Theorem \ref{thm:hankprocess} and  $R_k = \lim\limits_{n\to\infty} \E \big[\frac{1}{n}\Tr \big( A_n(1)\big)^{k} \big]$. 

Note that, to prove (\ref{eqn:hankprocess_a_0}),  it is enough to prove Proposition \ref{pro:finite convergence} and Proposition \ref{pro:tight} for the process $\{\tilde{w}_p(t);t\ge 0\}$. The idea of proof is same as the proof of Theorem \ref{thm:hankprocess}. One can also see, \cite[Theorem 4]{m&s_toeplitz_2020},  where Maurya and Saha have derived similar type of result for band Toeplitz matrices. 
\end{remark}

In the following remark, we discuss the process convergence of $\{w_{p}(t);t \geq 0\}$ for odd $ p\geq 1$.

\begin{remark} \label{rem:hankprocess_wp_odd_proof}
 By the similar ideas as used in the proof of Theorem \ref{thm:hank_odd_wp}, one can derive the following result for any odd integer $ p\geq 1$ and $t \geq 0$,
	\begin{equation*}
	 w_{p}(t)  \stackrel{d}{\rightarrow} 0 \mbox{ as } n \to \infty,
	\end{equation*}
	which implies the finite dimensional convergence of $\{w_{p}(t);t \geq 0\}$ for odd $ p\geq 1$. The proof of tightness of $\{w_{p}(t);t \geq 0\}$ for odd $ p\geq 1$ goes similar to the proof of tightness of $\{w_{p}(t);t \geq 0\}$ for even $ p\geq 2$ (Proof of Proposition \ref{pro:tight}) and hence we conclude that for odd $p$,
		\begin{equation*}
	\{ w_{p}(t) ; t \geq 0\} \stackrel{\mathcal D}{\rightarrow} 0  \mbox{ as } n \to \infty.
	\end{equation*}
\end{remark}

\section{Proof of Theorem \ref{thm:hank_odd_wp}}\label{sec:thm:hank_odd_wp}

We  prove Theorem \ref{thm:hank_odd_wp} by showing that $\var(w_p) \rightarrow 0$ as $n \rightarrow \infty$.
\begin{proof}[Proof of Theorem \ref{thm:hank_odd_wp}: ]
First note from (\ref{eqn:w_p}) that for $p=1$, $w_1 =0$ and the result is trivial.

Using the trace formula from Result \ref{res:trace_Hn}, we get for every odd $p \geq 3$,
\begin{equation} \label{eqn:wp_trace}
w_p = \frac{\sqrt{b_n}}{n} \bigl\{ \Tr(A_n)^{p} - \E[\Tr(A_n)^{p}]\bigr\} = \frac{1}{n} b_{n}^{-\frac{p-1}{2}} \sum_{i=1}^{n} \sum_{J \in A_{p,i}} I_{J}\left(x_{J}-\E\left[x_{J}\right]\right), 
\end{equation}
where $I_J = \prod_{\ell=1}^p \chi_{[1, n]}\left(i-\sum_{q=1}^{\ell} j_{q}\right)$, $x_J= \prod_{\ell=1}^{p} x_{j_\ell}$ and for $1 \leq i \leq n$, define
    \begin{equation*}
    A_{p, i}=\left\{\left(j_{1}, \ldots, j_{p}\right) \in\left\{\pm 1, \ldots, \pm b_{n}\right\}^{p}: \sum_{k=1}^{p} (-1)^{k} j_{k}=2i-1-n\right\}.
\end{equation*}
	Since $\E(w_p)=0$ therefore $\Cov(w_p, w_q)= \E(w_p w_q)$ and hence from (\ref{eqn:wp_trace}), we get

	\begin{align}\label{eqn:cov}
	\Cov(w_p, w_q)=\E[w_p w_q]
	& = \frac{1}{n^{2}} b_{n}^{-\frac{p+q}{2}+1} \sum_{i_1, i_2=1}^n \sum_{J_1, J_2} \big\{ \E\left[x_{J_1} x_{J_2}\right]- \E[x_{J_1}]\E[x_{J_2}] \big\} I_{J_1} I_{J_2},
	\end{align}
where $J_1 \in A_{p,i_1}$ and $ J_2 \in A_{q,i_2}$. 

Notice that the summand in (\ref{eqn:cov}) will be non-zero only if each element in $S_{J_1} \cup S_{J_2}$ have multiplicity at least two and $S_{J_1} \cap S_{J_2} \neq \emptyset$, where for a vector $J=\left(j_{1}, j_{2}, \ldots, j_{p}\right) \in A_{p, i},$ 
the multi-set $S_{J}$ is defined as
\begin{equation*} 
  S_{J}=\left\{ j_{1},j_{2}, \ldots,j_{p} \right\}. 
\end{equation*} 
Since $p$ and $q$ are odd, there exist at least one element each in $S_{J_1}$ and $S_{J_2}$ with odd multiplicity in $S_{J_i}$. Notice that the number of terms with at least one component having odd multiplicity greater than 1 in $S_{J_i}$,  is at most $O(b_n^{\frac{p+q-3}{2}})$, leading to a zero contribution in (\ref{eqn:cov}) as $n \rightarrow \infty$. A similar argument also shows that terms with at least one $j_k$ having multiplicity greater than 2 in $S_{J_1}\cup S_{J_2}$ also lead to zero contribution in (\ref{eqn:cov}) as $n \rightarrow \infty$. Thus we restrict ourselves to the case where the multiplicity of every element in $S_{J_1} \cup S_{J_2}$ is 2. Since there exist elements of multiplicity $1$ in both $S_{J_1}$ and $S_{J_2}$, independence of the input sequence implies that $\E[x_{J_1}]=0=\E [x_{J_2}]$. Therefore from (\ref{eqn:cov}),
\begin{align}\label{eqn:covlimit}
     \lim_{n \rightarrow \infty} \Cov(w_p, w_q)  
     = \displaystyle \lim_{n \rightarrow \infty} \frac{1}{n^{2}} b_{n}^{-\frac{p+q}{2}+1} \sum_{i_1, i_2=1}^n \sum_{\pi \in \PP_2(p+q)} \sum_{J_\pi}  \E\left[x_{J_1} x_{J_2}\right] I_{J_1} I_{J_2} ,
\end{align}
where $\PP_2(p+q)$ is the set of all pair partitions of $\{ 1,2,\ldots , p+q \}$ and $J_{\pi}$ is the set of all vectors $(J_1,J_2)$ corresponding to $\pi$ with $J_1 \in A_{p,i_1}$ and $J_2 \in A_{q,i_2}$. 

The rest of the proof is done by considering the following cases:

\noindent \textbf{Case I.} \textbf{$\mathbf{i_1 + i_2 = n+1}$:}
Consider $J_1 \in A_{p,i_1}, J_2 \in A_{q,i_2}$. Since $p$ is odd, there exist a component in $J_1$ with multiplicity 1 in $S_{J_1}$, whose value would be determined by the other $ p-1$ terms. Hence the number of terms $(J_1,J_2) \in A_{p,i_1} \times A_{q,i_2}$ such that $\E[x_{J_1}x_{J_2}]$ is non-zero is of order $O\left(b_{n}^{\frac{p+q-2}{2}}\right)$. Hence, the sum of such terms in (\ref{eqn:covlimit}) is of order $O\left(\frac{1}{n^2} \times n \times \frac{1}{b_n^{\frac{p+q-2}{2}}} \times b_n^{\frac{p+q-2}{2}}\right)=O\left(\frac{1}{n}\right)$. 

\noindent \textbf{Case II.} \textbf{$\mathbf{i_1 + i_2 \neq n+1}$:} 
For a pair-partition $\pi \in \PP_{2}(p+q)$, consider an associated $J$ such that $J=(j_1,j_2, \ldots , j_{p+q}) = (J_1,J_2)$ where $J_1 = (j_1, j_2, \ldots, j_p)$ and $J_2 = (j_{p+1},j_{p+2}, \ldots , j_{p+q}); J_1 \in A_{p,i_1}$, $J_2 \in A_{q,i_2}$ and $\E[x_{J_1}x_{J_2}]$ is non-zero. 

\textbf{subcase (i). $i_1 \neq i_2$:}
 
Without loss of generality assume that $2i_1-n-1 \neq 0$. Since $\sum_{i=1}^p (-1)^k j_k = 2i_1-n-1$, $\sum_{i=1}^{p+q}(-1)^k j_k = 2(i_1+i_2)-2n-2$ and both of the right hand sides are non-zero, there is a loss of two degree of freedom. Thus the sum of such terms in (\ref{eqn:covlimit}) is $O\left(\frac{1}{n^2} \times n^2 \times \frac{1}{b_n^{\frac{p+q-2}{2}}} \times b_n^{\frac{p+q-4}{2}}\right)=O\left(\frac{1}{b_n}\right)$.

\textbf{subcase (ii). $i_1 = i_2$:}

By a similar argument, as given in Case I, we get that the order of the summation of such terms in  (\ref{eqn:covlimit}) would be $O\left(\frac{1}{n^{2}} \times n \times \frac{1}{b_n^{\frac{p+q-2}{2}}} b_{n}^{\frac{p+q-2}{2}}\right)=O\left(\frac{1}{n}\right)$. 

Hence on combining Case I and Case II, we get that for all odd integers $p, q \geq 2$
\begin{align*} \label{eqn:lim_cov_0}
\lim_{n \rightarrow \infty} \Cov(w_p, w_q) = 0, 
\end{align*}
which shows $\var(w_p)=0$ and hence $w_p \stackrel{d}{\longrightarrow}  0$. This completes the proof of Theorem \ref{thm:hank_odd_wp}.
\end{proof}
\begin{remark} \label{rem:poly_test_function_Hn}
Theorem \ref{thm:hank_odd_wp} can also be generalised for odd degree polynomial test functions. 
\end{remark}

\noindent{\bf Acknowledgement:} We express our most sincere thanks to Prof. Koushik Saha for reading the initial and final draft and providing helpful advice.

\providecommand{\bysame}{\leavevmode\hbox to3em{\hrulefill}\thinspace}
\providecommand{\MR}{\relax\ifhmode\unskip\space\fi MR }
\providecommand{\MRhref}[2]{%
  \href{http://www.ams.org/mathscinet-getitem?mr=#1}{#2}
}
\providecommand{\href}[2]{#2}

\end{document}